\documentclass[12pt]{amsart}
\usepackage{amssymb}
\usepackage[usenames,dvipsnames]{xcolor}
\usepackage[colorlinks,citecolor=blue,linkcolor=BrickRed]{hyperref}
\usepackage{enumerate}
\usepackage{amsfonts,latexsym,graphicx,epsfig,amssymb,color}
\usepackage{amsmath,amstext,url,setspace}
\usepackage{amstext}    
\usepackage{array}      
\usepackage{fancyhdr}
\usepackage{bbm}
\usepackage{tikz}
\usepackage{tkz-graph}
\usepackage{tkz-berge}
\usetikzlibrary{decorations.pathmorphing}
\usetikzlibrary{positioning}
\usetikzlibrary{patterns}
\usetikzlibrary{calc}
\newcommand{\C}{\mathcal{C}}
\newcommand{\vC}{\vec{\mathcal{C}}}
\newcommand{\vG}{\vec G}
\newcommand{\cA}{\mathcal{A}}
\newcommand{\bZ}{\mathbbm{Z}}

\newtheorem{theorem}{Theorem}[section]

\newtheorem{lemma}[theorem]{Lemma}

\newtheorem{claim}{Claim}[theorem]
\newtheorem{newclaim}[theorem]{Claim}
\theoremstyle{definition}

\DeclareMathOperator{\NS}{NS}

\newcommand{\LProofLabel}{}
\newcommand{\LProofType}{}

\newcommand\restr[2]{{
  \left.\kern-\nulldelimiterspace 
  #1 
  \vphantom{\big|} 
  \right|_{#2} 
  }}

\newcommand{\MakeCircle}[2]{
  \pgfmathsetmacro{\angle}{360/#2};%
  \draw [thick] (0,0) circle (#1);

  \foreach \i in {1,...,#2} {
    \begin{scope}[rotate=-\i * \angle]
    \node [inner sep=0pt, minimum size=0pt, outer sep=0pt]  (p\i) at (0,#1) {};
    \end{scope}
  }
}
\makeatletter
\renewcommand\subsection{\@startsection{subsection}{1}%
  \z@{.7\linespacing\@plus\linespacing}{.5\linespacing}%
  {\normalfont\scshape\centering}}
\makeatother
\usepackage{algorithm}
\usepackage{algpseudocode}
\usepackage{biblatex}
\bibliography{excluded}

\begin{document}
\author{Jim Geelen}
\email{jfgeelen@uwaterloo.ca}
\address{Department of Combinatorics and Optimization, University of Waterloo, Waterloo,
 ON, Canada}
\author{Edward Lee}
\email{e45lee@uwaterloo.ca}
\address{Department of Combinatorics and Optimization, University of Waterloo, Waterloo,
 ON, Canada}
\title{Naji's characterization of circle graphs}
\subjclass[2010]{05C62}
\begin{abstract}
We present a simpler proof of Naji's characterization of circle graphs.
\end{abstract}
\date{\today}
\thanks{This research was partially supported by grants from the
Office of Naval Research [N00014-10-1-0851] and NSERC [203110-2011]
as well as an NSERC Canada Graduate Scholarship.}

\maketitle
\section{Introduction}
A {\it circle graph} is the intersection graph of a finite set of chords on a circle.
This class of graphs has surprising connections with planar graphs;
De Fraysseix~[\ref{FRAY}] showed that a bipartite graph
is a circle graph if and only if
it is the fundamental graph of a planar graph.  By De Fraysseix's result any characterization
of the class of circle graphs gives a characterization for the class of planar graphs.

Naji~[\ref{NG-ORIG}] gave the following beautiful characterization of the class of circle graphs
by a system of linear equations over the two-element field $\mathbb{F}_2$; one attractive feature of this characterization is that
it immediately gives an efficient algorithm for recognizing circle graphs.
\begin{theorem}[Naji's Theorem]
  A graph is a circle graph if and only if there exist values $\beta(u,v)\in\mathbb{F}_2$
  for each distinct pair $(u,v)$ of vertices  such that
  \begin{enumerate}
    \item $\beta(v, w) + \beta(w, v) = 1$ for each edge $vw$,
    \item $\beta(x, v) + \beta(x, w) = 0$ for each triple $(x,v,w)$ of distinct vertices  such that  $vw$ is an edge but $xv$ and $xw$ are not, and
    \item $\beta(v, w) + \beta(w,v) + \beta(x,v) + \beta(x, w) = 1$ for each triple $(x,v,w)$ of distinct vertices  such that  $xv$ and $xw$ are edges but $vw$ is not.
     \end{enumerate}
\end{theorem}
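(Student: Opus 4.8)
The plan is to prove the two implications separately, working throughout with \emph{double occurrence words}: a word $w$ on an alphabet $V$ in which each letter occurs exactly twice \emph{realizes} a graph $G$ on vertex set $V$ if, for all distinct $u,v$, the pair $uv$ is an edge precisely when the two occurrences of $u$ alternate with the two occurrences of $v$ in $w$. Cutting a chord diagram open at a point of the circle that is not an endpoint of a chord gives such a word, and conversely, so ``$G$ is a circle graph'' is the same as ``$G$ is realized by some double occurrence word.''

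For the forward direction I would fix a double occurrence word $w$ realizing $G$ and define $\beta(u,v)=1$ to mean that the first occurrence of $v$ in $w$ lies strictly between the two occurrences of $u$. Condition~(1) is then immediate, since for an edge $uv$ the occurrences alternate and so exactly one of the two first occurrences lies inside the other pair. Conditions~(2) and~(3) I would check by inspecting the boundedly many ways the relevant pairs of occurrences can interleave. For~(2): when $vw$ is an edge, the occurrences of $v$ and of $w$ alternate and cut the word into regions, a vertex $x$ adjacent to neither has both of its occurrences inside one region, and in each case the first occurrence of $v$ lies between the two occurrences of $x$ if and only if the first occurrence of $w$ does, giving $\beta(x,v)=\beta(x,w)$. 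Condition~(3) is similar: when $xv$ and $xw$ are edges and $vw$ is not, only a few arrangements of the three pairs of occurrences are possible, and each satisfies the identity. This is a routine finite verification.

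For the reverse direction I would use induction on $|V(G)|$, the cases $|V(G)|\le 1$ being trivial. Given a solution $\beta$ for $G$, delete a vertex $z$; the restriction of $\beta$ to $V(G)\setminus\{z\}$ still satisfies~(1)--(3) for $G-z$, since deletion only removes constraints, so by induction $G-z$ has a realizing word $w'$. The remaining task is to insert the two new occurrences of $z$ into $w'$. The values $\beta(u,z)$ should constrain where the first of the two occurrences of $z$ goes: in a realization $\beta(u,z)=1$ exactly when that occurrence falls between the two occurrences of $u$, so it must be placed in one of a prescribed set of gaps of $w'$; the second occurrence of $z$ must then be placed so that $z$ interlaces exactly the vertices of $N(z)$ \emph{and} so that the values $\beta(z,u)$ come out correctly.

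The hard part, and essentially the whole content of the reverse direction, is showing that this insertion can always be performed. The obstruction is that a solution of~(1)--(3) need not be induced by \emph{any} double occurrence word: already for $G=K_3$ the equations have solutions for which the orientation of the edges of $G$ encoded by $\beta$ contains a directed cycle, and no word produces such an orientation. So the induction must be run with a strengthened hypothesis that carries enough information about the word along; equivalently, one first normalizes $\beta$ (for instance so that its orientation of $G$ is acyclic) and strengthens the claim to the assertion that normalized solutions really are induced by words. Granting the right strengthening, the crux becomes a feasibility statement: using conditions~(2) and~(3) in full, one must show that the gap for the first occurrence of $z$ and the gap for the second occurrence can be chosen simultaneously so as to meet all the interlacement requirements and all the $\beta$-value requirements at once. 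I expect that identifying the correct inductive statement and establishing this feasibility will be the main difficulty.
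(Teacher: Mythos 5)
Your forward direction is essentially the paper's: both define $\beta$ from a (cut-open, oriented) chord diagram by recording on which side of $u$ a distinguished end of $v$ falls, and both verify (1)--(3) by a finite case check. That half is fine.

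The reverse direction, however, has a genuine gap, and it is precisely at the point you flag as ``the crux.'' Your plan is to strengthen the induction to ``every suitably normalized solution $\beta$ is induced by a word,'' and then to show the two occurrences of the deleted vertex $z$ can always be inserted compatibly with $\beta$. This strengthened statement is false, and no normalization of the kind you describe can rescue it. The only equivalence available on solutions (corresponding to reversing the orientation of a chord $c$) is $\beta\mapsto\beta+\delta_c$, and this operation preserves the property of being induced by a word. But both $K_4$ and the claw $K_{1,3}$ --- which \emph{are} circle graphs --- admit solutions of (1)--(3) that are not induced by any word (explicit tables are easy to produce: for the claw $G[\{x,a,b,c\}]$ one can arrange $\beta(a,b)+\beta(a,c)=\beta(b,a)+\beta(b,c)=\beta(c,a)+\beta(c,b)=1$, which would require each of the three pairwise disjoint chords $a,b,c$ to separate the other two, an impossibility). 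Since these bad solutions survive every reorientation, your feasibility claim --- that the occurrences of $z$ can always be placed so that the values $\beta(z,u)$ and $\beta(u,z)$ come out right --- fails already on four vertices, and the induction collapses.

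The actual resolution has to abandon $\beta$ when the insertion fails rather than repair it. One shows that if a connected graph carries a non-realizable solution, then it contains an induced claw or $K_4$ on which the restricted solution is already non-realizable (this part is close to your insertion analysis: the forbidden positions for the head of the new chord cover the circle, and a minimal cover yields the small obstruction). The key second step --- the hardest part of the argument and entirely absent from your plan --- is that any split of that four-vertex obstruction extends to a split $(X,Y)$ of the whole graph. One then decomposes $G$ along this split into two strictly smaller graphs $G_1$, $G_2$, each of which inherits a solution and is therefore a circle graph by minimality, and recombines their chord diagrams into one for $G$ by the standard join operation for split decompositions. Crucially, this recombination makes no attempt to realize the original $\beta$; it only realizes $G$. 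Without the split-extension step there is no way around the $K_4$/claw counterexamples to your inductive statement.
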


The specialization of Naji's Theorem to bipartite graphs gives a characterization of planar graphs. In fact, more generally, it characterizes when a binary matroid
is the cycle matroid of a planar graph. Geelen and Gerards~[\ref{GEL}] extended that specialization by characterizing when a binary matroid is graphic.

Naji's original proof, which appears in his doctoral dissertation, is very long and was not published in a journal.
Gasse~[\ref{GASSE}] published a short proof of Naji's Theorem, but it relies on Bouchet's excluded-vertex minor characterization 
of the class of circle graphs~[\ref{BO}] which is itself long and difficult.
On the other hand, Geelen and Gerards gave a very short and intuitive proof of their characterization of graphic matroids.
Motivated by that proof, Traldi~[\ref{TLN}] gave a shorter proof of Naji's Theorem, but unlike the proof in~[\ref{GEL}], Traldi's proof is 
not self-contained, relying on Bouchet's analogue of Tutte's Wheels Theorem for vertex-minors; see~[\ref{Bouchet1987}]. We give a self-contained 
proof based on the methods presented in~[\ref{GEL}].

\section{Overview}

We refer to the system of equations in Naji's Theorem as the
{\em Naji system} for the graph. For adjacent vertices $v$ and $w$, we 
denote the equation  $\beta(v, w) + \beta(w, v) = 1$ by $\NS_1(v,w)$.
For adjacent vertices $v$ and $w$ and a vertex $x$ adjacent to neither $v$ nor $w$, we denote the
equation $\beta(x, v) + \beta(x, w) = 0$ by $\NS_2(x,v,w)$.
For distinct non-adjacent vertices $v$ and $w$ and a vertex $x$ adjacent to both $v$ and $w$, we denote the
equation $\beta(v, w) + \beta(w,v) + \beta(x,v) + \beta(x, w) = 1$  by $\NS_3(x,v,w)$.

A {\it chord diagram} is a drawing of a circle and some chords with disjoint ends.
A {\it circle graph} is the intersection graph of the chords of some chord diagram;
see Figure \ref{label:circlegraph}.  

\begin{figure}[h!]
  \begin{tikzpicture}[scale=0.3]
    \MakeCircle{4}{10}
    \draw [thick] (p1) -- (p4);
    \draw node [inner sep=0pt, outer sep=0pt, minimum size=1pt,above right=2pt of p1] {1};
    \draw node [inner sep=0pt, outer sep=0pt, minimum size=1pt,below right=2pt of p4] {1};
    \draw [thick] (p9) -- (p2);
    \draw node [inner sep=0pt, outer sep=0pt, minimum size=1pt,above left=2pt of p9] {5};
    \draw node [inner sep=0pt, outer sep=0pt, minimum size=1pt,right=2pt of p2] {5};
    \draw [thick] (p3) -- (p6);
    \draw node [inner sep=0pt, outer sep=0pt, minimum size=1pt,right=2pt of p3] {2};
    \draw node [inner sep=0pt, outer sep=0pt, minimum size=1pt,below left=2pt of p6] {2};
    \draw [thick] (p5) -- (p8);
    \draw node [inner sep=0pt, outer sep=0pt, minimum size=1pt,below=2pt of p5] {3};
    \draw node [inner sep=0pt, outer sep=0pt, minimum size=1pt,left=2pt of p8] {3};
    \draw [thick] (p7) -- (p10);
    \draw node [inner sep=0pt, outer sep=0pt, minimum size=1pt,below left=2pt of p7] {4};
    \draw node [inner sep=0pt, outer sep=0pt, minimum size=1pt,above=2pt of p10] {4};
  \end{tikzpicture}
  \hspace{10mm}
  \begin{tikzpicture}[scale=1.2]
    \Vertices{circle}{1,5,4,3,2}
    \Edges(1,2,3,4,5,1)
  \end{tikzpicture}
  \caption{A Chord Diagram and its Circle Graph}
  \label{label:circlegraph}
\end{figure}
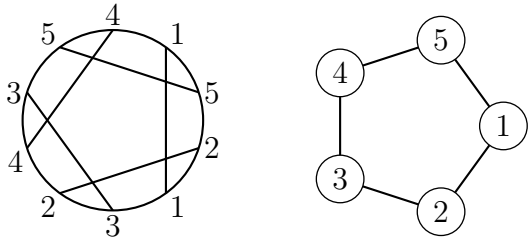

We start by showing how to construct a solution to the Naji system when $G$ is a circle graph.
Let $\C$ be a chord diagram for a circle graph $G=(V,E)$ and let $\vec\C$ be obtained from $\C$ by giving 
each chord an orientation. For a pair $(v,w)$ of distinct vertices of $G$ we define $\beta_{\vec\C}(v,w)=0$ if
the head of the chord $w$ is ``to the right" of the chord $v$ in $\vec\C$ (that is, we encounter the head of $w$ when we travel clockwise from the 
head of $v$ to the tail of $v$); see Figure~\ref{figure:solution:set}.
Otherwise, when the head of $w$ is to the left of $v$, we define $\beta_{\vec\C}(v,w)=1.$
Note that, if $u$ and $v$ are intersecting chords in $\vC$, then exactly one of $u$ and $v$
crosses the other from left to right; so $\beta_{\vC}$ satisfies $\NS_1(u,v)$. Moreover, if $x$ 
is a third chord that crosses neither $u$ nor $v$, then the heads of $u$ and $v$ lie on the same side of $x$; 
so $\beta_{\vC}$ satisfies $\NS_2(x,u,v)$.  
Finally consider three distinct chords $x$, $u$, and $v$ where $x$ intersects both $u$ and $v$ but $u$ and $v$ do
not intersect. Note that the heads of $u$ and $v$ are on the same side of $x$ if and only if exactly one of $u$ 
and $v$ is to the left of the other; thus
\[\beta_{\vC}(x,u)+\beta_{\vC}(x,v) = \beta_{\vC}(u,v)+\beta_{\vC}(v,u)+1\] and, hence, $\beta_{\vC}$ satisfies $\NS_3(x,u,v)$.

\begin{figure}
	\begin{tikzpicture}[scale=0.3]
		\MakeCircle{4}{12}
		\draw [->, line width = 2] (p6) -- (p12);
		\draw [->, line width = 2] (p9) -- (p3);
    	\draw node [inner sep=0pt, outer sep=0pt, minimum size=1pt,above=2pt of p12] {$v$};
    	\draw node [inner sep=0pt, outer sep=0pt, minimum size=1pt,right=2pt of p3] {$w$};	
	\end{tikzpicture}
	\begin{tikzpicture}[scale=0.3]
		\MakeCircle{4}{12}
		\draw [->, line width = 2] (p6) -- (p12);
		\draw [->, line width = 2] (p1) -- (p4);
    	\draw node [inner sep=0pt, outer sep=0pt, minimum size=1pt,above=2pt of p12] {$v$};
    	\draw node [inner sep=0pt, outer sep=0pt, minimum size=1pt,below right=2pt of p4] {$w$};
	\end{tikzpicture}
	\begin{tikzpicture}[scale=0.3]
		\MakeCircle{4}{12}
		\draw [->, line width = 2] (p6) -- (p12);
		\draw [->, line width = 2] (p4) -- (p1);
    	\draw node [inner sep=0pt, outer sep=0pt, minimum size=1pt,above=2pt of p12] {$v$};
    	\draw node [inner sep=0pt, outer sep=0pt, minimum size=1pt,below right=2pt of p4] {$w$};
	\end{tikzpicture}
	\caption{When $\beta(v, w) = 0$.}
	\label{figure:solution:set}
\end{figure}
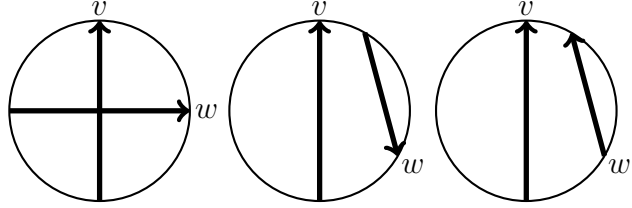

So, if $G$ is a circle graph, then the Naji system has a solution. To verify the converse we consider a solution $\beta$ to the Naji system of a graph $G$.
If $\vC$ is an oriented chord diagram for $G$ and $\beta=\beta_{\vec\C}$ then we say that $\vC$ is an {\em oriented chord diagram for $(G,\beta)$}.
We say that $\beta$ is {\em chordal} if there is an oriented chord diagram for $(G,\beta)$.
Unfortunately not all solutions to Naji systems are chordal; for example, 
both the complete graph $K_4$ and the Claw $K_{1,3}$, depicted in Figure~\ref{fig:splits}, admit non-chordal solutions, 
as shown in the following two tables:
\[
\left.\begin{array}{c|cccc} \beta & a & b & c & d \\ \hline a &  & 1 & 0 & 0 \\ b  & 0 & & 1 & 0 \\ c & 1 & 0 & & 0 \\ d & 1 & 1 & 1  \end{array}\right.
\hspace{2cm}
\left.\begin{array}{c|cccc} \beta & x & a & b & c \\ \hline x &  & 1 & 1 & 1 \\ a  & 0 & & 0 & 1 \\ b & 0 & 1 & & 0 \\ c & 0 & 0 & 1  \end{array}\right.
\]

Note that, if $\beta$ is chordal, then the restriction of $\beta$ to any induced subgraph of $G$ is also chordal.
We will call an induced subgraph $H$ of $G$ an {\em obstruction} if the restriction of $\beta$ to $H$
is not chordal. We prove the following result in the next section; it shows that the Claw and $K_4$ are the minimal connected obstructions.
\begin{lemma}\label{obstructions}
Let $\beta$ be a solution to the Naji system for a graph $G$.
If $\beta$ is not chordal and $G$ is connected, then there is an obstruction that is isomorphic to the Claw or to $K_4$.
\end{lemma}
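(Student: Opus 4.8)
The plan is to prove the contrapositive by induction on $|V(G)|$: if $G$ is connected and $\restr{\beta}{H}$ is chordal for every induced subgraph $H$ isomorphic to the Claw or to $K_4$, then $\beta$ is chordal. For the base case one checks directly that, among the connected graphs on at most four vertices, only the Claw and $K_4$ admit a non-chordal solution of their Naji system; for those two there is nothing to prove, since each is an induced subgraph of itself, and for the remaining graphs --- $K_1$, $K_2$, $P_3$, $K_3$, $P_4$, $C_4$, the paw, and the diamond $K_4\setminus e$ --- the verification is finite: in each case one checks that reorienting chords and applying graph automorphisms to a single chord diagram realizing the graph already produces every solution of the Naji system.

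For the inductive step, suppose $|V(G)|\ge 5$, and choose a vertex $v$ for which $G':=G-v$ is connected (a non-cut-vertex; these exist in any connected graph). Every induced Claw or $K_4$ of $G'$ is also one of $G$, so by hypothesis $\restr{\beta}{H}$ is chordal for each such $H$, and the inductive hypothesis gives that $\beta':=\restr{\beta}{G'}$ is chordal; fix an oriented chord diagram $\vC'$ for $(G',\beta')$.

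The core of the argument is to insert the chord $v$ into $\vC'$: we must choose two new points on the circle, splitting it into two arcs, and designate one of them as the head of $v$, so that the chords crossing $v$ are exactly those in $N(v)$ and $\beta_{\vC}=\beta$. Writing $R$ for the arc traversed clockwise from the head of $v$ to its tail (``the right side of $v$''), one checks that $\beta_{\vC}=\beta$ forces a $2$-colouring of the endpoints of $\vC'$: for $w\notin N(v)$ both endpoints of $w$ must lie in $R$ when $\beta(v,w)=0$ and in the complementary arc when $\beta(v,w)=1$; for $w\in N(v)$ the head of $w$ must lie in $R$ when $\beta(v,w)=0$ and in the complementary arc otherwise, with the tail of $w$ on the other side. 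Conversely, if each colour class of this forced colouring is a single contiguous arc, then placing the two new points at the two resulting boundary arcs and orienting $v$ so that $R$ is the ``$R$''-colour class yields a diagram in which, by exactly the identities used above to verify that $\beta_{\vC}$ satisfies $\NS_1$ and $\NS_3$ in the forward direction, $\beta_{\vC}=\beta$. Hence $v$ is insertable into $\vC'$ if and only if each colour class of the forced colouring is a contiguous arc, and in that case $\beta$ is chordal and the induction is complete.

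It therefore remains to treat the case where, for the chosen $v$ and $\vC'$, the forced colouring has at least four colour blocks around the circle, and here is where I expect the real work. The plan is two-fold: first, exploit the freedom in the choice of $\vC'$ --- two oriented chord diagrams for $(G',\beta')$ differ by reversals of sub-diagrams lying on one side of a split of $G'$, which alter the forced colouring in a controlled way --- and, if needed, a different non-cut-vertex $v$, to reduce to the case where the number of colour blocks is minimized; second, when that minimum still exceeds two, extract a Claw or $K_4$ obstruction, either directly among $v$ and (a bounded number of) chords responsible for the surplus colour transitions, or by first passing to a smaller connected induced subgraph on which the restriction of $\beta$ is non-chordal and invoking the inductive hypothesis. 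The connectedness of $G$ is precisely what rules out ``flipping away'' the surplus transitions into a merely disconnected obstruction (two disjoint edges being the smallest such), and so forces a genuine Claw or $K_4$ to surface. Identifying the right few vertices, and verifying both that their induced subgraph is one of the two targets and that the colouring failure really does certify non-chordality of $\beta$ there, is the main obstacle; everything else reduces to bookkeeping with arcs and the Naji identities.
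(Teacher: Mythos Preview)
Your inductive scaffold---delete a non-cut-vertex, realize the smaller graph by an oriented chord diagram, and try to insert the missing chord---is exactly the framework the paper uses. But two pieces of your plan do not survive contact with the details.

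First, there is no freedom in the choice of $\vC'$. The paper proves (Lemma~\ref{lemma:unique}) that for a connected graph with a chordal solution $\beta'$ the oriented chord diagram for $(G',\beta')$ is unique up to equivalence; the ``split reversals'' you have in mind change $\beta_{\vC'}$ and hence are not available once $\beta'$ is fixed. So minimizing the number of colour blocks by varying $\vC'$ is not an option, and your proposed first tool simply does not exist.

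Second, and more seriously, you explicitly identify ``the main obstacle''---locating a concrete Claw or $K_4$ among $v$ and the chords causing surplus colour transitions and certifying that $\beta$ is non-chordal there---and then do not do it. That step is where all the content lies. The paper handles it with a dichotomy that your colouring language obscures: rather than colouring endpoints by $\beta(v,\cdot)$, it tracks for each chord $w$ the open arc $H(\vC,\beta,w)$ into which the \emph{head} of $v$ must go (determined by $\beta(w,v)$), and separately the arc $T(\vC,\beta,w)$ for the tail. If already two head-arcs (or two tail-arcs) have empty intersection, a shortest-path argument using the strengthened $\NS_2$ (Lemma~\ref{strong}) pins down a Claw-obstruction on $v$, the two offending chords, and one intermediate vertex. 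Otherwise every pair of head-arcs meets, and a Helly-type lemma on minimal covers of the circle (Lemma~\ref{minimal:cover}) shows that any minimal set of chords with $H=\emptyset$ induces a cycle $C$ in $G-v$; one then proves via the Naji equations that $v$ is adjacent either to all of $C$ or to none, the former forcing $|V(C)|=3$ and a $K_4$-obstruction on $V(C)\cup\{v\}$, the latter being eliminated by rechoosing $C$ to pass through a neighbour of $v$. None of these ingredients---the shortest-path trick, the minimal-cover lemma, or the all-or-nothing adjacency claim---appears in your proposal, and without them the argument is only an outline.
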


We will use the terms {\em $K_4$-obstruction} and {\em Claw-obstruction} to refer to obstructions isomorphic
to $K_4$ and the Claw respectively. 

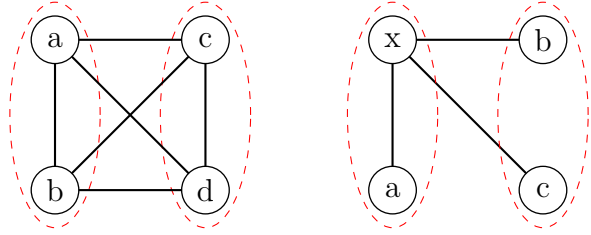
\begin{figure}
\begin{tikzpicture}
    \Vertex[x=-1,y=1]{a}
 \Vertex[x=-1,y=-1]{b}
 \Vertex[x=1,y=-1]{d}
 \Vertex[x=1,y=1]{c}
     \draw[red, dashed] (-1,0) ellipse (6mm and 15mm);
    \draw[red, dashed] (1,0) ellipse (6mm and 15mm);
   \Edge(a)(d)
   \Edge(b)(d)
   \Edge(c)(d)
   \Edge(b)(a)
   \Edge(c)(a)
   \Edge(c)(b)
\end{tikzpicture}
\hspace{10mm}
\begin{tikzpicture}
 \Vertex[x=-1,y=1]{x}
 \Vertex[x=-1,y=-1]{a}
 \Vertex[x=1,y=-1]{c}
 \Vertex[x=1,y=1]{b}
    \Edge(x)(a)
    \Edge(x)(b)
    \Edge(x)(c)
    \draw[red, dashed] (-1,0) ellipse (6mm and 15mm);
    \draw[red, dashed] (1,0) ellipse (6mm and 15mm);
\end{tikzpicture}
\caption{Splits in $K_4$ and the Claw}
\label{fig:splits}
\end{figure}

A {\em split} in a graph $G$ is a partition $(X,Y)$ of $V(G)$ such that $X$ and $Y$ each have at least two vertices 
and the set of edges connecting $X$ to $Y$ induces a complete bipartite graph. Note that there are three ways to partition
four vertices into pairs and each of these partitions is a split both in $K_4$ and in the Claw; see Figure~\ref{fig:splits}.
The hardest part of our proof is showing that splits in $K_4$- or Claw-obstructions extend to splits in $G$; see Section~\ref{section:splits} for details.
\begin{lemma}\label{induced:split}
Let $\beta$ be a non-chordal solution to the Naji system for a graph $G$, let $H$ be a
$K_4$-obstruction or a Claw-obstruction in $G$, and let $(X,Y)$ be a split in $H$. Then 
there is a split $(X',Y')$ in $G$ such that $X\subseteq X'$ and $Y\subseteq Y'$.
\end{lemma}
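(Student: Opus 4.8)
The plan is as follows. Since $H$ has four vertices and each side of a split has at least two, both sides have exactly two; write $X=\{p,q\}$ and $Y=\{r,s\}$. In the Claw case we may assume, after possibly interchanging $X$ and $Y$, that $p$ is the centre, so that $pq,pr,ps$ are the edges of $H$; in the $K_4$ case all six pairs are edges. I would begin by normalising $\beta$ on $V(H)$. Reversing a chord corresponds to an explicit involution on $\beta$ (flip $\beta(v,\cdot)$, and flip $\beta(\cdot,v)$ on the neighbours of $v$) which one checks preserves solutions of the Naji system and preserves chordality, hence preserves both the hypotheses and the (purely graph-theoretic) conclusion of the lemma; using these involutions on the vertices of $H$ together with the relabellings of $V(H)$ that fix the split, one reduces $\beta|_{V(H)}$ to a short list of normal forms --- here the hypothesis that $\beta|_{V(H)}$ is \emph{not} chordal is exactly what makes the list short and concrete.

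With $\beta|_{V(H)}$ fixed, the next step is to read off, for each vertex $z\in V(G)\setminus V(H)$, what its relationship to $H$ must be. Since restrictions of chordal solutions are chordal and $\beta|_{V(H)}$ is not, $\beta$ restricted to $V(H)\cup\{z\}$ is never chordal; combined with the equations $\NS_1,\NS_2,\NS_3$ on the five vertices $V(H)\cup\{z\}$ and the known values of $\beta|_{V(H)}$, this pins the adjacencies between $z$ and $\{p,q,r,s\}$ together with the values $\beta(z,p),\dots,\beta(z,s)$ (and their reverses) down to a short list of ``types''. Each type should carry an unambiguous verdict placing $z$ into one of the four classes $X_1',\,X'\setminus X_1',\,Y_1',\,Y'\setminus Y_1'$ of the split to be constructed, with $p,q$ and $r,s$ receiving the verdicts forced by $X\subseteq X'$ and $Y\subseteq Y'$. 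This produces a candidate partition $(X',Y')$ and candidate core $(X_1',Y_1')$ of $V(G)$.

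The last step --- and the one I expect to be the real work --- is to prove that $(X',Y')$ is a split with crossing-edge core $(X_1',Y_1')$: that for every $z\in X'$ and $z'\in Y'$ one has $zz'\in E(G)$ if and only if $z\in X_1'$ and $z'\in Y_1'$. Since the type of a vertex depends only on its six-or-fewer-vertex relationship to $H$, this breaks into finitely many cases according to the types of $z$ and of $z'$, each concerning the graph and the values of $\beta$ on $V(H)\cup\{z,z'\}$; in each case I would write down the pertinent Naji equations on this set and deduce the correct adjacency status of $zz'$, a contradiction with those equations or with the established type of $z$ or $z'$ appearing whenever the status is wrong. The main obstacle is exactly this verification: keeping the list of types and the pairwise analysis uniform across the $K_4$-obstruction and the Claw-obstruction and across the three splits (which, once $\beta$ is fixed, are genuinely distinct), and in particular dealing with those pairs $(z,z')$ whose mutual adjacency is not already forced by the six-vertex data. (A direct induction on $|V(G)|$ --- delete an outside vertex $z$, note that $H$, and hence non-chordality, survives, recurse, and re-insert $z$ --- does not work, because a split of $G-z$ extending $(X,Y)$ need not survive the addition of $z$; committing to the canonical partition above is what circumvents this.)
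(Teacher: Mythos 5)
There is a genuine gap, and it is exactly at the step you flag as ``the real work.'' Your plan assigns each vertex $z\in V(G)\setminus V(H)$ to a side of the split based solely on the data of $G$ and $\beta$ restricted to $V(H)\cup\{z\}$, and then verifies splitness pairwise on $V(H)\cup\{z,z'\}$. This cannot succeed: a vertex $z$ with \emph{no} neighbours in $V(H)$ has a type that carries no adjacency information at all, and two vertices of identical type may be forced to opposite sides of the eventual split (one lies in a component of $G$ minus the relevant separator that attaches to $X$'s side, the other in a component attaching to $Y$'s side). Correspondingly, in the pairwise verification both ``$zz'\in E$'' and ``$zz'\notin E$'' are consistent with the Naji equations on the six vertices $V(H)\cup\{z,z'\}$ for such pairs, so no case analysis on types can derive the non-adjacencies a split requires. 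The side a distant vertex belongs to is inherently global information --- it is determined by which component of $G$ minus a suitable cut it lies in --- and no canonical local partition exists.

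The paper's proof supplies precisely the missing global mechanism. For the Claw it takes the separator $X=N(a)\cap N(b)\cap N(c)$, shows via Lemma~\ref{strong} (propagation of $\NS_2$ along induced paths) and a shortest-path argument that $a,b,c$ lie in distinct components of $G-X$, and shows any vertex with one neighbour in $X$ is adjacent to all of $X$; the split is then assembled from whole components. For $K_4$ it first \emph{grows} the obstruction into maximal sets $(X_a,X_b,X_c,X_d)$ (so that the local analysis applies to many copies of $K_4$, not just $H$), proves that any path joining two of its vertices forces an even $4$-cycle (again via Lemma~\ref{strong} on a path of arbitrary length, not a bounded vertex set), and again distributes the components of $G-(X\cup Y)$. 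Your normalisation-by-reorientation and the classification of vertices adjacent to two or more vertices of $H$ are sound and do appear (in orientation language) in the paper, but without the path-propagation lemma and the component-based construction the argument does not close.
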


In our proof of Lemma~\ref{induced:split} we consider $K_4$-obstructions
and Claw-obstructions independently, but readers familiar with the paper of Gasse~[\ref{GASSE}] 
will observe that the two cases are in fact equivalent under local complementation.

Now we prove Naji's Theorem as a consequence of Lemmas~\ref{obstructions} and~\ref{induced:split}.
\begin{proof}[Proof of Naji's Theorem]
Consider a counter-example $G=(V,E)$ with $|V|$ minimum. Thus $G$ is not a circle graph
but there is a solution $\beta$ to the Naji system for $G$. Since $G$ is not a circle graph,
$\beta$ is not chordal. By minimality, $G$ is connected. 
Then, by  Lemmas~\ref{obstructions} and~\ref{induced:split},
the graph $G$ has a split $(X,Y)$. 
Let $xy$ be an edge with $x\in X$ and $y\in Y$. Let $G_1$ and $G_2$ denote the 
subgraphs of $G$ induced by $X\cup \{y\}$ and $Y\cup\{x\}$ respectively; see Figure~\ref{figure:shrink}.

Since $G_1$ and $G_2$ both have fewer vertices than $G$, by minimality, both $G_1$ and $G_2$ 
are circle graphs. By a well known folklore result printed by Bouchet in [\ref{Bouchet1987}], we can construct a chord diagram
for $G$ by composing chord diagrams for  $G_1$ and $G_2$ together as shown in Figure~\ref{figure:shrink-chord},
contradicting the fact that $G$  is not itself a circle graph.
\end{proof}

\begin{figure}
\begin{center}
  \begin{tikzpicture}[scale=0.75]
    \GraphInit[vstyle = Simple]
    \SetVertexSimple[MinSize = 3pt]
    \begin{scope}[shift={(0,1.8)}]
      \Vertices[x=-0.8,y=-1,dir=\NO]{line}{1,2,3}
      \Vertices[x=0.8,y=-1,dir=\NO]{line}{4,5,6}
      \draw (-1.5,0) ellipse (1.2 and 1.5);
      \draw (1.5,0) ellipse (1.2 and 1.5);
      \Edge(1)(4) \Edge(1)(5) \Edge(1)(6)
      \Edge(2)(4) \Edge(2)(5) \Edge(2)(6)
      \Edge(3)(4) \Edge(3)(5) \Edge(3)(6)
      \node at (-1.5,-1.1) {$X$};
      \node at (1.5,-1.1) {$Y$};
    \end{scope}
    \begin{scope}[shift={(-3,-1.8)}]
      \Vertices[x=-0.8,y=-1,dir=\NO]{line}{1a,2a,3a}
      \Vertex[x=0.8,y=0]{4a}
      \draw (-1.5,0) ellipse (1.2 and 1.5);
      \Edge(1a)(4a) \Edge(2a)(4a) \Edge(3a)(4a)
      \node at (-1.5,-1.1) {$X$};
      \node at (1,-.5) {$y$};
    \end{scope}
    \begin{scope}[shift={(3,-1.8)}]
      \Vertex[x=-0.8,y=0]{2b}
      \Vertices[x=0.8,y=-1,dir=\NO]{line}{4b,5b,6b}
      \draw (1.5,0) ellipse (1.2 and 1.5);
      \Edge(2b)(4b) \Edge(2b)(5b) \Edge(2b)(6b)
      \node at (-1,-.5) {$x$};
      \node at (1.5,-1.1) {$Y$};
    \end{scope}
  \end{tikzpicture}
  \caption{$G$, $G_1$, and $G_2$}
  \label{figure:shrink}
\end{center}
\end{figure}
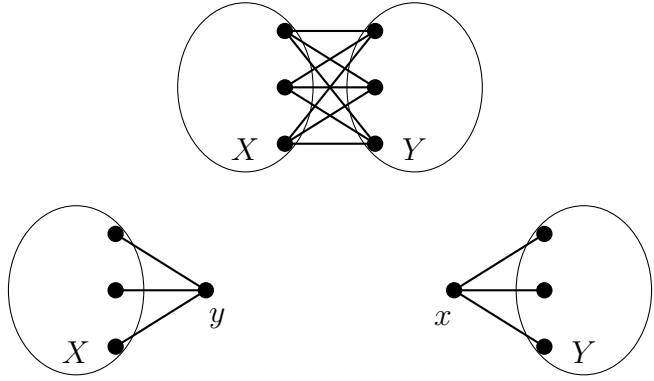

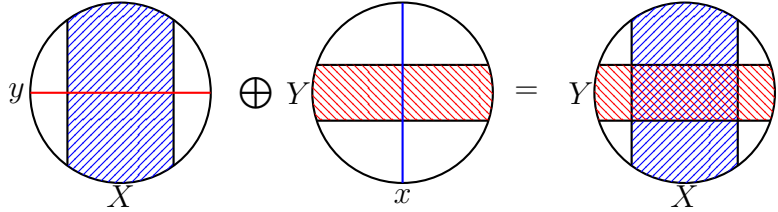
\begin{figure}
\begin{center}
  \begin{tikzpicture}[scale=0.3]
    \begin{scope}[shift={(9,0)}]
      \MakeCircle{4}{10}
      \draw [thick] (p9) -- (p6);
      \draw [thick] (p4) -- (p1);
      \draw [thick] (p2) -- (p8);
      \draw [thick] (p3) -- (p7);
      \node at (0,-4.6) {$X$};
      \node at (-4.6,0) {$Y$};
      \clip circle (4);
      \draw[pattern = north east lines, pattern color = blue] ([shift={(0,1)}]p9) rectangle ([shift={(0,-1)}]p4);
      \draw[pattern = north west lines, pattern color = red] ([shift={(-1,0)}]p8) rectangle ([shift={(1,0)}]p3);
    \end{scope}
    \begin{scope}[shift={(-16,0)}]
      \MakeCircle{4}{10}
      \draw [thick] (p9) -- (p6);
      \draw [thick] (p4) -- (p1);
      \node at (0,-4.6) {$X$};
      \node at (-4.6,0) {$y$};
      \clip circle (4);
      \draw[pattern = north east lines, pattern color = blue] ([shift={(0,1)}]p9) rectangle ([shift={(0,-1)}]p4);
      \draw[thick, color=red] (-4,0) -- (4,0);
    \end{scope}
    \begin{scope}[shift={(-3.5,0)}]
      \MakeCircle{4}{10}
      \draw [thick] (p2) -- (p8);
      \draw [thick] (p3) -- (p7);
      \node at (0,-4.6) {$x$};
      \node at (-4.6,0) {$Y$};
      \clip circle (4);
      \draw[pattern = north west lines, pattern color = red] ([shift={(-1,0)}]p8) rectangle ([shift={(1,0)}]p3);
      \draw[thick, color=blue] (0,4) -- (0,-4);
    \end{scope}
    \node at (2,0) {$\displaystyle{=}$};
    \node at (-10,0) {$\bigoplus$};
  \end{tikzpicture}
  \caption{Chord diagrams for $G_1$, $G_2$ and $G$}
  \label{figure:shrink-chord}
\end{center}
\end{figure}

\section{Finding the obstructions}

The goal of this section is to prove Lemma~\ref{obstructions}.
We need the following strengthening of the Naji equations of type $(2)$.
\begin{lemma}\label{strong}
Let $\beta$ be a solution to the Naji system for a graph $G$ and let $x$, $u$, and $v$
be distinct vertices. If there is a path $P$ from $u$ to $v$ and 
$x$ is not adjacent to any vertex in $P$, then $\beta(x,u)=\beta(x,v)$.
\end{lemma}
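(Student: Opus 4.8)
The plan is to prove Lemma~\ref{strong} by induction on the length (number of edges) of the path $P$. The intuition is that Naji's type-$(2)$ equations already give the conclusion when $P$ is a single edge, and a longer path can be handled by peeling off one edge at a time.

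First I would dispose of the base case: if $P$ has length $1$, then $uv$ is an edge of $G$. Because $x$ is adjacent to no vertex of $P$, the three vertices $x,u,v$ are distinct and neither $xu$ nor $xv$ is an edge, so the equation $\NS_2(x,u,v)$ is part of the Naji system and yields $\beta(x,u)+\beta(x,v)=0$, i.e.\ $\beta(x,u)=\beta(x,v)$. For the inductive step, suppose $P$ has length at least $2$ and write $P = u\,p_1\,p_2\cdots v$, where $p_1$ is the neighbour of $u$ along $P$. Let $P'$ be the subpath of $P$ from $p_1$ to $v$; it is strictly shorter than $P$, and $x$ is adjacent to no vertex of $P'$ since $V(P')\subseteq V(P)$, so by the induction hypothesis $\beta(x,p_1)=\beta(x,v)$. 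Now $up_1$ is an edge, and $x$ is distinct from and non-adjacent to both $u$ and $p_1$ (again because $x$ is adjacent to no vertex of $P$, which in particular forces $x\notin V(P)$), so $\NS_2(x,u,p_1)$ gives $\beta(x,u)=\beta(x,p_1)$. Chaining the two equalities gives $\beta(x,u)=\beta(x,v)$, completing the induction.

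I do not expect a genuine obstacle here: the lemma is essentially the statement that the ``two-step'' relation encoded by the type-$(2)$ equations is transitive along any path avoiding the closed neighbourhood of $x$. The only point requiring a little care is bookkeeping — checking at each invocation of $\NS_2$ that the distinctness and non-adjacency hypotheses of a type-$(2)$ equation actually hold — but all of these are immediate consequences of the single hypothesis that $x$ is adjacent to no vertex of $P$. (One could equivalently do the induction by shortening $P$ from the $v$-end, or induct on $|V(P)|$; the argument is the same.)
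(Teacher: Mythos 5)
Your proof is correct and is essentially the paper's argument: the paper chains the equations $\NS_2(x,v_i,v_{i+1})$ along consecutive vertices of $P$ directly, whereas you package the same chaining as an induction on the length of $P$. The bookkeeping you note (distinctness, non-adjacency, $x\notin V(P)$) is handled correctly, so there is nothing to add.
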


\begin{proof} By possibly taking shortcuts, we may assume that the path $P$ is an induced subgraph of $G$.   
Suppose that the vertices of $P$ are $(v_0,v_1,\ldots,v_k)$, in that order.   
For each $i\in \{0,\ldots,k-1\}$, the equation $\NS_2(x,v_i,v_{i+1})$ gives
$\beta(x,v_{i})=\beta(x,v_{i+1})$. Hence $\beta(x,u)=\beta(x,v)$, as required.
\end{proof}

We start by describing an equivalence operation on solutions to the Naji system; this operation essentially corresponds 
to reorienting chords, but it also applies to non-chordal solutions.

Let $\vec\C_1$ be an oriented chord diagram for a circle graph $G$ and let $\vec\C_2$ denote a second chord
diagram obtained from $\vec\C_1$ by changing the orientation on a single chord $c$. For distinct vertices
$u$ and $v$ we have
$\beta_{\vec\C_1}(u,v) \neq\beta_{\vec\C_2}(u,v)$ if and only if
either $c=u$ or $c=v$ and $uv$ is an edge.

Now consider an arbitrary graph $G$. For three vertices $c,u,v\in V$ with $u\neq v$ we define $\delta_c(u,v)=1$
if either $c=u$ or $c=v$ and $uv$ is an edge; otherwise we define $\delta_c(u,v)=0$.
The following result, due to Gasse~[\ref{GASSE}], shows that the equivalence that we saw above on 
chordal solutions extends to arbitrary solutions of the Naji system; the proof can be verified by an easy case check.
\begin{lemma} \label{lemma:swap}
If $\beta$ be a solution to the Naji system of a graph $G$ and $c$ is a vertex of $G$, then
$\beta+\delta_c$ is also a solution to the Naji system.
\end{lemma}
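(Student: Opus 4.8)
The plan is to reduce the assertion to a statement purely about $\delta_c$ and then dispatch it with a short computation.

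First I would observe that, because $\beta$ already satisfies $\NS_1$, $\NS_2$ and $\NS_3$ and adding $\delta_c$ leaves the right-hand constants untouched, the function $\beta+\delta_c$ is a solution to the Naji system if and only if $\delta_c$ satisfies the homogeneous analogues of those equations, namely
\[
\delta_c(v,w)=\delta_c(w,v),\qquad \delta_c(x,v)=\delta_c(x,w),\qquad \delta_c(v,w)+\delta_c(w,v)+\delta_c(x,v)+\delta_c(x,w)=0,
\]
the first for every edge $vw$, the second for every triple $(x,v,w)$ in which $vw$ is an edge but $xv$ and $xw$ are not, and the third for every triple $(x,v,w)$ in which $xv$ and $xw$ are edges but $vw$ is not.

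Next I would record the closed form, valid for distinct $u$ and $v$: $\delta_c(u,v)=[c=u]+[c=v]\cdot[uv\in E(G)]$, where $[\cdot]$ denotes the $\mathbb{F}_2$-valued indicator. This is legitimate since $c=u$ and $c=v$ are mutually exclusive, so the ``or'' in the definition of $\delta_c$ is addition in $\mathbb{F}_2$. From here each of the three identities above is one line: for an edge $vw$ we get $\delta_c(v,w)=[c=v]+[c=w]=\delta_c(w,v)$; for an $\NS_2$-triple $(x,v,w)$ both $\delta_c(x,v)$ and $\delta_c(x,w)$ equal $[c=x]$ because $xv,xw\notin E(G)$; and for an $\NS_3$-triple $(x,v,w)$, using $vw\notin E(G)$ and $xv,xw\in E(G)$, the four terms sum to $[c=v]+[c=w]+([c=x]+[c=v])+([c=x]+[c=w])=0$.

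There is no real obstacle here. The only point that deserves a moment's care is writing down the (asymmetric) definition of $\delta_c$ correctly and noticing that its defining ``or'' is an $\mathbb{F}_2$-sum; once that is in place the verification is mechanical. A reader who would rather not introduce the closed form can instead split into the cases $c\in\{x,v,w\}$ and $c\notin\{x,v,w\}$, read off the values of $\delta_c$ from the definition in each, and obtain the same identities.
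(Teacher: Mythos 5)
Your proof is correct and is precisely the ``easy case check'' that the paper leaves to the reader: reducing to the homogeneous system for $\delta_c$ and verifying it via the $\mathbb{F}_2$-indicator formula $\delta_c(u,v)=[c=u]+[c=v]\cdot[uv\in E(G)]$ is exactly what is intended, and all three verifications check out. No issues.
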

We say that $\beta+\delta_c$ is obtained from $\beta$ by {\em reorienting} $c$.

Let $\beta$ be a solution to the Naji system for a graph $G=(V,E)$. Our goal is to find either a $K_4$-obstruction or a 
Claw-obstruction in the case that $\beta$ is not chordal. 

For a set $X$ of vertices, we let $\beta[X]$ denote the restriction of $\beta$ to $X$. Suppose that 
$\beta[V-c]$ is chordal for some $c\in V$, and let $\vec\C$ be an oriented chord diagram 
for $(G-c,\beta[V-c])$. Consider trying to extend $\vec\C$ to an oriented 
chord diagram for $(G,\beta)$. For a vertex $v\in V-c$, if $\beta(v,c)=0$, then 
we want to place the head of $c$ to the right of $v$ and if $\beta(v,c)=1$ then we wish to place
the head of $c$ to the left of $v$. 

Let $H(\beta,\vec\C,v)$ denote the  open arc of the circle into which the head of
$c$ is required to be placed relative to $v$. Therefore the head of $c$ is required to go into the intesection of
the sets $H(\beta,\vec\C,v)$ where $v$ ranges over all vertices in $V-c$; we denote this intersection
by $H(\beta,\vec\C)$. 

To determine the position of the tail of $c$ we simply reorient $\beta$ at $c$; thus we define
\[T(\beta,\vec\C,v)=H(\beta+\delta_c,\vec\C,v), \text{ and}\] 
\[T(\beta,\vec\C)=H(\beta+\delta_c,\vec\C).\]

\begin{lemma} \label{lemma:extend}
Let $\beta$ be a solution for the Naji system of a graph $G=(V,E)$, let $c\in V$, and let $\vec\C$ be an oriented 
chord diagram for $(G-v, \beta[V-c])$. Now let $t\in T(\beta,\vec\C)$, $h\in H(\beta,\vec\C)$,
and let $\vec\C_1$ denote the oriented chord diagram obtained from $\vec\C$ by adding $c$ as an oriented chord with tail
$t$ and head $h$. Then $\vec\C_1$ is an oriented chord diagram for $(G,\beta)$. 
\end{lemma}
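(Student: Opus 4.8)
The plan is to establish the two properties that make $\vec\C_1$ an oriented chord diagram for $(G,\beta)$: that its intersection graph is $G$, and that $\beta_{\vec\C_1}=\beta$. As $\vec\C_1$ and $\vec\C$ differ only in the chord $c$ (tail $t$, head $h$), both reduce to how $c$ sits against each old chord $v$. For the intersection graph, observe that $\delta_c(v,c)=1$ exactly when $vc\in E$; hence $T(\beta,\vec\C,v)=H(\beta,\vec\C,v)$ when $vc\notin E$, while $T(\beta,\vec\C,v)$ is the open arc complementary to $H(\beta,\vec\C,v)$ when $vc\in E$. Since $t\in T(\beta,\vec\C)\subseteq T(\beta,\vec\C,v)$ and $h\in H(\beta,\vec\C)\subseteq H(\beta,\vec\C,v)$, in the first case $t$ and $h$ lie in a common one of the two arcs into which the ends of $v$ split the circle, so $c$ and $v$ are disjoint, while in the second case they lie in opposite arcs, so $c$ crosses $v$ (and in particular $t\neq h$); choosing $t,h$ generically in these open sets so that the ends of $\vec\C_1$ are distinct, $\vec\C_1$ is a chord diagram with intersection graph $G$.

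Next I would check $\beta_{\vec\C_1}=\beta$ on every pair except, possibly, the pairs $(c,v)$ with $v$ non-adjacent to $c$. On pairs from $V-c$ this is immediate, since those chords keep their cyclic positions. For a pair $(v,c)$, by definition $\beta_{\vec\C_1}(v,c)=0$ iff $h$ lies in the open arc to the right of $v$, and that arc is $H(\beta,\vec\C,v)$ precisely when $\beta(v,c)=0$ and is the complementary arc when $\beta(v,c)=1$; as $h\in H(\beta,\vec\C,v)$, we get $\beta_{\vec\C_1}(v,c)=\beta(v,c)$. Now $\vec\C_1$ is a chord diagram whose intersection graph is $G$, so (as in the Overview) $\beta_{\vec\C_1}$ is itself a solution of the Naji system for $G$; applying $\NS_1(v,c)$ to $\beta_{\vec\C_1}$ and to $\beta$ together with the previous sentence gives $\beta_{\vec\C_1}(c,v)=\beta(c,v)$ whenever $vc\in E$.

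The remaining case, $\beta_{\vec\C_1}(c,v)=\beta(c,v)$ for $v$ non-adjacent to $c$, is the main obstacle: this value is not determined by the placement regions $T(\beta,\vec\C),H(\beta,\vec\C)$ alone (within a fixed arc it flips with the cyclic order of $t$ and $h$), so the Naji equations must be used. I would argue algebraically. Set $\gamma=\beta+\beta_{\vec\C_1}$; since $\beta$ and $\beta_{\vec\C_1}$ both satisfy the affine Naji system for $G$, $\gamma$ satisfies the homogeneous system (all right-hand sides are now $0$), and by the two preceding paragraphs $\gamma$ is supported on pairs $(c,v)$ with $v$ distinct from and non-adjacent to $c$. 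Rerunning the proof of Lemma~\ref{strong} with the homogeneous $\NS_2$ equations shows $\gamma(c,u)=\gamma(c,u')$ whenever $u,u'$ lie on a common path avoiding the neighbours of $c$. Finally fix $v$ non-adjacent to $c$; using that $G$ is connected (which holds throughout this section), choose a path from $c$ to $v$, let $x$ be its last vertex adjacent to $c$, and $y$ the next vertex. Then $y$ is non-adjacent to $c$ and the portion of the path from $y$ to $v$ avoids the neighbours of $c$, so $\gamma(c,y)=\gamma(c,v)$. Since $x$ is adjacent to both $c$ and $y$, while $c$ and $y$ are non-adjacent, the homogeneous equation $\NS_3(x,y,c)$ reads $\gamma(y,c)+\gamma(c,y)+\gamma(x,y)+\gamma(x,c)=0$; the terms $\gamma(y,c),\gamma(x,y),\gamma(x,c)$ vanish, so $\gamma(c,y)=0$ and hence $\gamma(c,v)=0$. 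Thus $\gamma\equiv 0$, i.e. $\beta_{\vec\C_1}=\beta$, completing the proof.
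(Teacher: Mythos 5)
Your proposal is correct, and it is substantially more complete than the paper's own argument. The paper's proof consists of the crossing check you give in your first paragraph (via $\delta_c(v,c)=1 \iff vc\in E$, so $t$ and $h$ are separated by the chord $v$ exactly when $vc\in E$), preceded by the bare assertion ``by construction $\beta_{\vec\C_1}=\beta$.'' You correctly identify that this assertion is not literally by construction: the placement constraints $h\in H(\beta,\vec\C,v)$ and $t\in T(\beta,\vec\C,v)$ pin down $\beta_{\vec\C_1}(v,c)$ for every $v$, and $\NS_1$ then pins down $\beta_{\vec\C_1}(c,v)$ for neighbours $v$ of $c$, but the values $\beta_{\vec\C_1}(c,v)$ for $v$ non-adjacent to $c$ depend on the cyclic order of $h$, $t$ and the head of $v$, which the constraints do not control. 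Your homogeneous-system argument (setting $\gamma=\beta+\beta_{\vec\C_1}$, propagating with $\NS_2$ as in Lemma~\ref{strong}, and killing $\gamma(c,y)$ with $\NS_3(x,y,c)$) closes this gap cleanly. The price is an appeal to connectedness of $G$, which is not among the lemma's stated hypotheses but does hold in every application (Lemmas~\ref{lemma:unique} and~\ref{obstructions}); indeed your two-vertex example shows some such hypothesis is genuinely needed for the conclusion $\beta_{\vec\C_1}=\beta$, so this is a defect of the statement rather than of your proof. The only cosmetic point is your ``generic choice'' of $t,h$: the arcs $H(\beta,\vec\C,v)$ already avoid all chord ends, and when $c$ has a neighbour the sets $T$ and $H$ are disjoint, so no genericity is really needed there.
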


\begin{proof}
By construction $\beta_{\vec\C_1}=\beta$, so it only remains to prove that $\vec\C_1$ is an oriented chord diagram for $G$.
Consider a vertex $v\in V-c$. By definition $\delta_c(v,c)=1$ if and only if $vc\in E$. So the chord $v$ separates
$h$ from $t$ if and only if $vc\in E$, as required.
\end{proof}

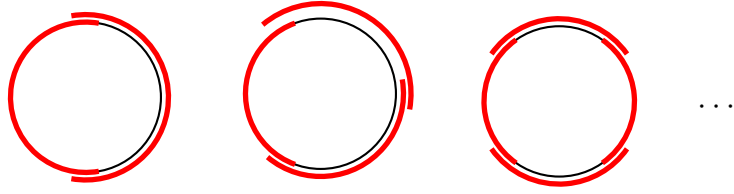
\begin{figure}
\begin{tikzpicture}
		\MakeCircle{1}{12}
		\draw ++(80:1) [color=red, line width = 2] arc (80:280:1);
		\draw ++(-100:1.1) [color=red, line width = 2] arc (-100:100:1.1);
\end{tikzpicture}
\hspace{5mm}
\begin{tikzpicture}
		\MakeCircle{1}{12}
		\draw ++(110:1) [color=red, line width = 2] arc (110:250:1);
		\draw ++(230:1.1) [color=red, line width = 2] arc (230:370:1.1);
		\draw ++(-10:1.2) [color=red, line width = 2] arc (-10:130:1.2);
\end{tikzpicture}
\hspace{5mm}
\begin{tikzpicture}
		\MakeCircle{1}{12}
		\draw ++(125:1) [color=red, line width = 2] arc (125:235:1);
		\draw ++(-55:1) [color=red, line width = 2] arc (-55:55:1);
		\draw ++(35:1.1) [color=red, line width = 2] arc (35:145:1.1);
		\draw ++(215:1.1) [color=red, line width = 2] arc (215:325:1.1);
\end{tikzpicture}
\hspace{5mm}
\raisebox{1cm}{$\cdots$}
\caption{Minimal covers}
\label{fig:covers}
\end{figure}

We are interested in the combinatorial properties of oriented chord diagrams 
as opposed to the specific topology; we can encode this combinatorial information
by traversing the perimeter of the circle in a clockwise direction and recording 
for each end of a chord that we encounter both the name of the chord and whether 
that end is a head or a tail. We consider two oriented chord diagrams to be {\em equivalent}
if they have the same such encodings.
\begin{lemma} \label{lemma:unique}
Let $\beta$ be a solution for the Naji system of a graph $G$. If $\beta$ is chordal and $G$ is connected,
then, up to equivalence, there is a unique oriented chord diagram for $(G,\beta)$.
\end{lemma}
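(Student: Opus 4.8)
The plan is to induct on $|V(G)|$. If $|V(G)|\le 1$ the statement is trivial, so assume $|V(G)|\ge 2$ with $G$ connected, and choose a vertex $c$ such that $G-c$ is connected (for instance a leaf of a spanning tree of $G$). Since $\beta$ is chordal so is $\beta[V-c]$, so by the inductive hypothesis $(G-c,\beta[V-c])$ has, up to equivalence, a unique oriented chord diagram; let $\sigma$ be its clockwise encoding (a cyclic sequence of labelled chord-ends). Now let $\vec\C_1,\vec\C_2$ be oriented chord diagrams for $(G,\beta)$. Deleting the chord $c$ from $\vec\C_i$ yields an oriented chord diagram for $(G-c,\beta[V-c])$ --- here we use that $\beta_{\vec\C_i}(u,v)$ depends only on the chords $u$ and $v$ --- so $\vec\C_i-c$ has encoding $\sigma$. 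The encoding of $\vec\C_i$ is thus obtained from $\sigma$ by inserting the head and the tail of $c$ into the cyclic sequence, so it suffices to determine, from $\sigma$ and $\beta$ alone, which gap of $\sigma$ receives the head of $c$, which receives the tail, and (if these coincide) their order.

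Call a gap of $\sigma$ \emph{head-feasible} if, for every $v\in V-c$, it lies on the side of $v$ prescribed by $\beta(v,c)$; equivalently, if $\vec\C$ is an oriented chord diagram for $(G-c,\beta[V-c])$ with encoding $\sigma$, the gap lies inside $H(\beta,\vec\C,v)$ for every $v\in V-c$. Call a gap \emph{tail-feasible} if it is head-feasible for $\beta+\delta_c$ in place of $\beta$; recall that $\beta+\delta_c$ is again a Naji solution, by Lemma~\ref{lemma:swap}. Since $\beta_{\vec\C_i}=\beta$, in $\vec\C_i$ the head of $c$ lies in a head-feasible gap and the tail in a tail-feasible gap. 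The crux is the claim that there is \emph{exactly one} head-feasible gap and \emph{exactly one} tail-feasible gap. Granting this, suppose these two gaps coincided in some common gap $g$; then in $\vec\C_i$ the chord $c$ would cross no chord of $V-c$, since both ends of $c$ lie in the arc corresponding to $g$ while the ends of every other chord lie outside it, so no four of these endpoints alternate around the circle. But then $c$ would be isolated in $G$, contradicting that $G$ is connected with at least two vertices. Hence in each $\vec\C_i$ the head and tail of $c$ are inserted into two determined, distinct gaps of $\sigma$, so $\vec\C_1\equiv\vec\C_2$.

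It remains to prove the claim, and we treat the head-feasible case (the other is identical with $\beta+\delta_c$ in place of $\beta$). Suppose $g\ne g'$ are both head-feasible. Fix an oriented chord diagram $\vec\C$ for $(G-c,\beta[V-c])$ with encoding $\sigma$ and choose points $h\in g$ and $h'\in g'$ on the circle; they split the circle into two open arcs $I$ and $J$. Head-feasibility of both $g$ and $g'$ says that for every $v\in V-c$ the gaps $g$ and $g'$ lie on the same side of $v$, and this forces the two ends of $v$ to lie together in $I$ or together in $J$ (the sub-arc of $v$'s ``other side'' joining $h$ to $h'$ is one of $I,J$ and contains no end of $v$). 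Colour each chord of $V-c$ by which of $I,J$ contains its ends; two chords of different colours have non-alternating endpoints and so do not cross, hence $G-c$ has no edge joining the two colour classes. As $G-c$ is connected, one class is empty --- say no chord has its ends in $J$ --- so $J$ contains no chord-ends at all, the gap of $\sigma$ containing $h$ then contains all of $J$ and in particular $h'$, and therefore $g=g'$, a contradiction. The main obstacle is exactly this claim: one has to rule out a second feasible gap, and the decisive point is that a pair of distinct feasible gaps would partition the chords of $\sigma$, and hence the vertices of $G-c$, into two non-adjacent parts --- which is precisely where connectivity of $G-c$ (and the choice of $c$) is used.
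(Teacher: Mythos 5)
Your proposal is correct and follows essentially the same route as the paper: induct by deleting a non-cut vertex $c$, use the inductive uniqueness of the diagram for $G-c$, show via connectivity of $G-c$ that the feasible region for the head (and, via $\beta+\delta_c$, the tail) of $c$ is a single gap, and use an edge at $c$ to separate the head-gap from the tail-gap. The only cosmetic difference is in how the single-gap claim is argued (two candidate points splitting the circle into non-crossing classes, versus the paper's maximal closed arcs disjoint from $H(\beta,\vec\C)$), but both hinge on the same observation that chords on opposite sides cannot cross.
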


\begin{proof}
We may  assume that $G$ has at least two vertices and, hence, that $G$ has a vertex $c$ such that 
$G-c$ is connected. Inductively we may assume that there is a unique oriented chord diagram $\vec\C$
for $(G-c,\beta[V(G-c)])$. Since $\beta$ is chordal, there is an oriented chord
diagram $\vec\C_1$ of $G$ respecting $\beta$; let $h$ and $t$ denote the head and tail of $c$ in $\vec\C_1$
respectively. By uniqueness, we may assume that $\vec\C_1$ contains $\vec\C$.  Hence, 
\[t\in T(G-c,\beta[V(G-c)]) \text{ and } h\in H(G-c,\beta[V(G-c)]).\] 

\begin{claim} 
$T(G-c,\beta[V(G-c)])$ and $H(G-c,\beta[V(G-c)])$ are both open arcs neither of which contains an end of any chord.
\end{claim}

\begin{proof}[Proof of claim.]
It suffices to prove the claim for $H(G-c,\beta[V(G-c)])$. 
Since $H(G-c,\beta[V(G-c)])$ is the intersection of finitely many open arcs it is an open set with finitely many components.
Since, for each $v\in V(G-c)$, the arc $H(G-c,\beta[V(G-c)],v)$ does not contain an end of $v$, the set
$H(G-c,\beta[V(G-c)])$ does not contain the end of a chord.
Let $A$ be a maximal closed arc disjoint from $H(G-c,\beta[V(G-c)])$ and let
$C$ denote the set of chords in $\vec C$ having an end in $A$.
Since the ends of $A$ are ends of chords, the set $C$ is non-empty. Moreover,
for each chord $v\in C$, since the arc  $H(G-c,\beta[V(G-c)],v)$ contains $H(G-c,\beta[V(G-c)])$, 
both ends of $v$ are contained in $A$. Then, since $G-c$ is connected, $H(G-c,\beta[V(G-c)])$ consists of a single arc.
\end{proof}

Now, to prove the uniqueness of $\vec\C_1$, it remains to prove that $T(G-c,\beta[V(G-c)]) \neq H(G-c,\beta[V(G-c)])$.
Since $G$ is connected, there is a neighbour $v$ of $c$. Note that $\delta_C(v,c)=1$, so
$T(G-c,\beta[V(G-c)],v)$ is disjoint from  $H(G-c,\beta[V(G-c)],v)$ and, hence,
\[T(G-c,\beta[V(G-c)]) \neq H(G-c,\beta[V(G-c)])\] as required.
\end{proof}

If we cannot extend $\vec\C$ to an oriented 
chord diagram of $(G,\beta)$ then, by Lemma~\ref{lemma:extend}, 
one of $T(\beta,\vec\C)$ and $H(\beta,\vec\C)$ is empty; by possibly reorienting $\beta$ at $c$
we may assume that $H(\beta,\vec\C)=\emptyset$. 
For a subset $X\subseteq V-c$ we let $H(\vC,\beta,X)$ denote the intersection of
the arcs $H(\vC,\beta,v)$ taken over all $v\in X$.
Consider a minimal subset $X$ of $V-c$ such that
$H(\beta,\vec\C,X)$ is empty.

Note that a collection of arcs has empty intersection if and only if the union 
of their complements covers the circle.
Figure~\ref{fig:covers} show some minimal configurations of arcs that cover the circle;
the following result shows that this list of examples is exhaustive.
In this result $\bZ_k$ denotes the set of integers modulo $k$.
\begin{lemma}\label{minimal:cover}
Let $\cA$ be a finite collection of closed arcs of a circle whose union covers the circle.
Then there is a sequence $(A_1,\ldots,A_k)$ of arcs in $\cA$ such that
$(A_1,\ldots,A_k)$ covers the circle and such that, for each $i\in \bZ_k$, the arc 
$A_i$ has a non-empty intersection with the arcs $A_{i-1}$ and $A_{i+1}$ but is disjoint from all other arcs.
\end{lemma}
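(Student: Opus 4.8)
The plan is to pass to a subcover of minimum size and show that it already has the required structure once its arcs are reordered. So I would first choose arcs $A_1,\dots,A_k\in\cA$ whose union is the whole circle with $k$ as small as possible. The cases $k\le 2$ are disposed of directly: if $k=1$ then $A_1$ is the whole circle and the one-term sequence $(A_1)$ works; if $k=2$ then $A_1\cap A_2\neq\emptyset$, since two disjoint closed arcs cannot cover the connected circle, and again the whole cover is the desired sequence. From now on assume $k\ge 3$.

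By minimality of $k$, for each $i$ the union of the arcs $A_j$ with $j\neq i$ is not the whole circle, so I may pick a point $p_i$ lying in $A_i$ and in no other $A_j$; these ``private points'' $p_1,\dots,p_k$ are pairwise distinct. Relabel the arcs so that $p_1,\dots,p_k$ occur in this cyclic order clockwise around the circle. The key claim, which is the technical core of the proof, is that for each $i$ the arc $A_i$ is contained in the open arc $J_i$ running clockwise from $p_{i-1}$ to $p_{i+1}$ (through $p_i$), with indices read modulo $k$. This holds because $A_i$ is not the whole circle (otherwise $k=1$), hence $A_i$ is a proper closed arc; it contains $p_i$ but none of the other private points, and being connected neither of its two endpoints can reach past the neighbouring private points $p_{i-1}$ and $p_{i+1}$ without swallowing them, which is precisely the assertion.

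Granting the claim, both conclusions are short. If $A_i$ and $A_j$ are non-consecutive (so $j\notin\{i-1,i,i+1\}$, which forces $k\ge 4$), then an inspection of the cyclic order of the private points shows that $J_i$ and $J_j$ are disjoint open arcs, whence $A_i\cap A_j=\emptyset$. For consecutive arcs $A_i$ and $A_{i+1}$: every $A_j$ with $j\notin\{i,i+1\}$ lies in $J_j$ and therefore misses the open arc running clockwise from $p_i$ to $p_{i+1}$; since $A_1,\dots,A_k$ cover the circle, the closed arc $[p_i,p_{i+1}]$ (taken clockwise) is covered by $A_i\cup A_{i+1}$ alone. Then $A_i\cap[p_i,p_{i+1}]$ and $A_{i+1}\cap[p_i,p_{i+1}]$ are nonempty closed sets (containing $p_i$ and $p_{i+1}$ respectively) whose union is the connected arc $[p_i,p_{i+1}]$, so they must meet, giving $A_i\cap A_{i+1}\neq\emptyset$. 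Hence $(A_1,\dots,A_k)$ is the required sequence. I expect the only delicate points to be the bookkeeping with the cyclic order in the disjointness check and the precise verification of the confinement claim $A_i\subseteq J_i$; the connectedness argument forcing consecutive arcs to overlap is brief but is the conceptual heart of the matter.
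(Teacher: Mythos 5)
Your proposal is correct and follows essentially the same route as the paper: pass to an irredundant (here, minimum-size) subcover, use the points covered by exactly one arc to fix the cyclic order, and read off the intersection pattern. Your write-up just makes explicit the confinement claim $A_i\subseteq J_i$ and the connectedness argument for consecutive overlaps, which the paper compresses into ``by construction.''
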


\begin{proof} We may assume that $\cA$ is a minimal cover. Consider the set $P$ of points on the circle
that are in exactly one arc of $\cA$. Now $P$ partitions into a finite collection $(P_1,\ldots,P_k)$ of open arcs, which we number
according to their order around the circle. For each $i\in \{1,\ldots,k\}$, we let 
$A_i$ denote the arc containing $P_i$. By the minimality of $\cA$, the arcs $A_1,\ldots,A_k$ are distinct and
$\cA=\{A_1,\ldots,A_k\}$. By construction, for each $i\in \bZ_k$, the arc 
$A_i$ has a non-empty intersection with the arcs $A_{i-1}$ and $A_{i+1}$ but is disjoint from all other arcs.
\end{proof}

We are now ready to prove Lemma~\ref{obstructions}, which states that if $\beta$ is a solution is a non-chordal solution of the Naji system of a graph $G$, then there is an obstruction that is isomorphic to the Claw or $K_4$.

\begin{proof}[Proof of Lemma~\ref{obstructions}.]
We may assume that no proper induced subgraph of $G$ is both connected and an obstruction.
Let $c$ be a vertex of $G$ such that $G-c$ is connected, and let
$\vC$ be an oriented chord diagram for $(G-c,\beta[V(G-c)])$.

\medskip

\noindent
{\bf Case 1:}{ \it There are distinct vertices $u$ and $v$ of $G-c$ such that either
$H(\vC,\beta,\{u,v\})$ or $T(\vC,\beta,\{u,v\})$ is empty.}

\medskip

We may assume that $H(\vC,\beta,\{u,v\})$ is empty.
By possibly reorienting $\vC$ and $\beta$ on vertices in $V(G-c)$, we may assume that, for each vertex $w \in V(G-c)$,
the arc $H(\vC,\beta,w)$ is to the right of $w$ and hence that $\beta(v,c)= \beta(u, c) = 0$.
Let $P=(v_0,v_1,\ldots,v_k)$ be a shortest path from $c$ to $\{u,v\}$ in $G$; by symmetry, we may assume that $v_k=u$.
Note that $\beta(v,c)=0$ and, since the arcs to the right of $u$ and $v$ are disjoint, we have $\beta(v,u)=1$.
Then, by Lemma~\ref{strong}, the vertex $v$ must have a neighbour in $P$. 
Since $P$ is a shortest path from $c$ to $\{u,v\}$,  it must be the case that $v$ is adjacent to $v_{k-1}$.
Now we consider three cases $k=1$, $k>2$, and $k=2$.

First suppose that $k=1$. Summing  equations
$\NS_1(c,u)$, $\NS_1(c,v)$, and $\NS_3(c,u,v)$ gives
 $\beta(u,c)+\beta(v,c) + \beta(u,v) + \beta(v,u)=1$, which is a contradiction since
 $\beta(u,c)=\beta(v,c)=0$ and $\beta(u,v)=\beta(v,u)=1$.
 
 Now suppose that $k>2$. The head of $v_1$ is either to the left of $u$ or the left of $v$; by symmetry we may assume that
 it is to the left of $u$. Then $\beta(u,c)\neq\beta(u,v_1)$, contrary to $\NS_2(u,c,v_1)$.
 
 Finally suppose that $k=2$. Let $H=G[\{c,u,v,v_1\}]$; note that $H$ is isomorphic to the Claw.
 Moreover, since $H-c$ is connected and $H(\vC,\beta,V(H-c))=\emptyset$, it follows from
 Lemma~\ref{lemma:unique} that $H$ is an obstruction, as required.

\medskip

\noindent
{\bf Case 2:}{ \it For any two distinct vertices $u$ and $v$ of $G-c$, both
$H(\vC,\beta,\{u,v\})$ and $T(\vC,\beta,\{u,v\})$ are nonempty.}

\medskip

Since $\beta$ is not chordal, either $H(\vC,\beta))$ or $T(\vC,\beta)$ is empty.
By possibly reorienting $\beta$ at $c$, we may assume that $H(\beta,\vec\C)=\emptyset$.
By possibly reorienting $\vC$ and $\beta$ on vertices in $V(G-c)$, we may assume that, for each vertex $v\in V(G-c)$,
the arc $H(\vC,\beta,v)$ is to the right of $v$ and hence that $\beta(v,c)=0$.
Now, by Lemma~\ref{minimal:cover}, if $X$ is a minimal subset of $V(G-c)$ 
such that $H(\beta,\vec\C,X)=\emptyset$, then $G[X]$ is an induced cycle. 
Let $C$ be an induced cycle in $G-c$ with $H(\beta,\vec\C,V(C))=\emptyset$.
Let the vertices of $C$ be $(v_1,\ldots,v_k)$ in that cyclic order.
Among all such choices of $C$, if possible, we will take $C$ containing a neighbour of $c$.

\begin{newclaim} 
The vertex $c$ is either adjacent to every vertex in $C$ or adjacent to no vertex in $C$.
\end{newclaim}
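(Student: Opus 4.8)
The plan is to argue by contradiction. Suppose $c$ is adjacent to at least one but not every vertex of $C$. Since $C$ is a cycle, hence connected, there are two consecutive vertices $v_i,v_{i+1}$ on $C$ (indices modulo $k$) with exactly one of them adjacent to $c$; by symmetry assume $cv_i\in E$ and $cv_{i+1}\notin E$.

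First I would pin down how the chords $v_1,\dots,v_k$ sit inside $\vC$. Recall that $\beta$ has been reoriented on the vertices of $G-c$ so that $\beta(v_j,c)=0$ and $H(\vC,\beta,v_j)$ is the open arc $R_{v_j}$ to the right of $v_j$, for every $j$. Any proper subset of $V(C)$ induces a disjoint union of paths, and by Lemma~\ref{minimal:cover} no such set can have empty $H$-intersection; hence $V(C)$ is a \emph{minimal} subset of $V(G-c)$ with $H(\beta,\vC,V(C))=\emptyset$. Applying Lemma~\ref{minimal:cover} to the closed complements $A_{v_j}$ of the arcs $R_{v_j}$, these can be cyclically ordered so that consecutive arcs meet and non-consecutive arcs are disjoint; since crossing chords have intersecting $A$-arcs while $C$ has exactly $k$ edges among exactly $k$ cyclically consecutive pairs, that cyclic order must coincide with the cycle order $v_1,\dots,v_k$. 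Consequently each chord $v_j$ has one end in $A_{v_{j-1}}$ and one in $A_{v_{j+1}}$, the arc $R_{v_j}$ contains every chord end except those two neighbouring ends, and the private arc $\bigcap_{l\ne j}R_{v_l}$ is a nonempty open subarc of $A_{v_j}$ lying strictly between them.

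Next I would reduce to the case $V(G)=V(C)\cup\{c\}$: the graph $G[V(C)\cup\{c\}]$ is connected via the edge $cv_i$, its restriction to $C$ has the unique oriented chord diagram $\vC|_{V(C)}$ by Lemma~\ref{lemma:unique}, and by Lemma~\ref{lemma:extend} a solution extends over $c$ only if $H(\beta,\vC|_{V(C)},V(C))=\bigcap_j R_{v_j}$ is nonempty, which it is not; so $G[V(C)\cup\{c\}]$ is a connected obstruction and, by the assumption that no proper induced subgraph of $G$ is both connected and an obstruction, equals $G$. Now $G$ is the cycle $C$ together with one vertex $c$ adjacent to a nonempty proper subset $S\subsetneq V(C)$. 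To finish, I would run the Naji equations around the cycle: $\NS_3(v_{j-1},c,v_j)$ at a neighbour-to-nonneighbour transition forces $\beta(c,v_j)=1+\beta(v_{j-1},v_j)$ (using $\beta(v_{j-1},c)=\beta(v_j,c)=0$), the symmetric equation handles reverse transitions, $\NS_2(c,v_a,v_{a+1})$ — equivalently Lemma~\ref{strong} — propagates $\beta(c,\cdot)$ along a maximal run of non-neighbours of $c$, and $\NS_2(v_j,c,w)$ with $w\in S$ pins $\beta(v_j,c)$ to cycle data. The aim is to show that these constraints let one place a head and a tail for a chord $c$ consistently with $\beta$ whenever $S\notin\{\emptyset,V(C)\}$ — contradicting that $\beta$ is not chordal — or else, when $k\ge 4$, to locate a proper connected Claw- or $K_4$-obstruction. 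I expect the genuinely delicate part to be exactly this last step: the argument must be uniform over all cyclic patterns of $S$ (a single arc, an independent set, or a mixture), and it must separately settle the exceptional case $k=3$, where no proper obstruction is available and one has instead to check directly that the Naji systems of the paw and of the diamond admit only chordal solutions.
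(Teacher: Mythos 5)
Your write-up stops short of a proof. After assembling the right local ingredients you write ``the aim is to show that these constraints let one place a head and a tail for a chord $c$ \dots or else \dots to locate a proper connected Claw- or $K_4$-obstruction,'' and you yourself flag that this step --- a case analysis over all cyclic patterns of the neighbourhood $S$, plus a separate treatment of $k=3$ --- is left undone. That unfinished step is where the claim would actually be established, so this is a genuine gap. It is also a misdirected route: the claim is purely about adjacency, and you do not need the reduction to $V(G)=V(C)\cup\{c\}$, nor to re-embed $c$ as a chord, nor to hunt for a smaller obstruction.

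The contradiction is already sitting in the equations you wrote down. Let $v_j,\ldots,v_m$ be a maximal run of consecutive non-neighbours of $c$ on $C$ (nonempty and proper by your hypothesis), so that $v_{j-1}$ and $v_{m+1}$ are neighbours of $c$. Your transition equation $\NS_3(v_{j-1},c,v_j)$ gives $\beta(c,v_j)=1+\beta(v_{j-1},v_j)$, and the symmetric equation $\NS_3(v_{m+1},c,v_m)$ gives $\beta(c,v_m)=1+\beta(v_{m+1},v_m)$. The arc structure you established in your second paragraph (all $H$-arcs to the right, the closed complements meeting consecutively in the cyclic order of $C$) says the cycle is consistently oriented, i.e.\ $\beta(v_i,v_{i+1})=0$ and $\beta(v_{i+1},v_i)=1$ for all $i$; hence $\beta(v_{j-1},v_j)\neq\beta(v_{m+1},v_m)$ and so $\beta(c,v_j)\neq\beta(c,v_m)$ (when $j=m$ the two equations contradict each other outright). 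But $v_j,\ldots,v_m$ is a path none of whose vertices is adjacent to $c$, so Lemma~\ref{strong} forces $\beta(c,v_j)=\beta(c,v_m)$ --- a contradiction. This is precisely the paper's argument; once you have it, the reduction to $G=C+c$, the chord-placement analysis, and the $k=3$ special case all evaporate.
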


\begin{proof}[Proof of claim.]
Suppose not, then, up to symmetry, we may assume that $c$ is adjacent to $v_1$ but not $v_2$.
Choose $j\in\{2,\ldots,k\}$ maximum such that $c$ is adjacent to none of $\{v_2,\ldots,v_j\}$.
Note that $v_1$ is adjacent to $v_2$ and $c$ but $cv_2\not\in E$. Moreover,
$\beta(v_1,v_2)=0$, $\beta(v_2,v_1)=1$, and $\beta(v_2,c)=0$. Therefore, by $\NS_3(v_1,v_2,c)$,
we have $\beta(c,v_2)=0$. Similarly we have
$\beta(c,v_{j-1})=1$. In particular $\beta(c,v_1)\neq\beta(c,v_{j-1})$, contradicting
Lemma~\ref{strong}.
\end{proof}

\begin{newclaim} 
If $c$ is adjacent to every vertex in $C$, then $G$ has a $K_4$-obstruction.
\end{newclaim}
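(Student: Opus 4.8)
The plan is to show that the cycle $C$ must be a triangle, and that the induced $K_4$ on $\{c\}\cup V(C)$ is then a $K_4$-obstruction. Write $C=(v_1,\dots,v_k)$ in cyclic order. As in the analysis of Case~2, for $i$ let $L_i$ denote the complement of the open arc $H(\vC,\beta,v_i)$; thus $L_i$ is the closed arc lying to the left of $v_i$, and its two ends are exactly the two ends of the chord $v_i$ in $\vC$. Since $V(C)$ is a minimal set with $H(\vC,\beta,V(C))=\emptyset$, the arcs $L_1,\dots,L_k$ form a minimal cover in the sense of Lemma~\ref{minimal:cover} whose cyclic order matches that of $C$; in particular $L_i$ meets only $L_{i-1}$ and $L_{i+1}$, so $L_i$ contains no end of any chord $v_j$ with $j\notin\{i-1,i,i+1\}$ (an end of $v_j$ lying in $L_i$ would be a point of $L_i\cap L_j$). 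The key geometric consequence is: if $v_i,v_j$ are non-adjacent in $C$, then both ends of $v_j$ lie in $H(\vC,\beta,v_i)$, the arc to the right of $v_i$, so in particular $\beta(v_i,v_j)=0$, and symmetrically $\beta(v_j,v_i)=0$.

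First I would prove $k=3$. Suppose instead $k\ge 4$, and take two non-adjacent vertices of $C$, say $v_1$ and $v_3$; by hypothesis both are adjacent to $c$. By the geometric fact above, $\beta(v_1,v_3)=\beta(v_3,v_1)=0$. On the other hand, the normalization $\beta(v,c)=0$ made earlier in Case~2 together with $\NS_1(v_1,c)$ and $\NS_1(v_3,c)$ gives $\beta(c,v_1)=\beta(c,v_3)=1$, and then $\NS_3(c,v_1,v_3)$ reads
\[\beta(v_1,v_3)+\beta(v_3,v_1)+\beta(c,v_1)+\beta(c,v_3)=1,\]
forcing $\beta(v_1,v_3)+\beta(v_3,v_1)=1$ and contradicting the previous line. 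Hence $k=3$, and since $c$ is adjacent to each of $v_1,v_2,v_3$, the graph $G[\{c,v_1,v_2,v_3\}]$ is a $K_4$.

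It remains to show this $K_4$ is an obstruction, i.e.\ that the restriction of $\beta$ to $\{c,v_1,v_2,v_3\}$ is not chordal. If it were chordal there would be an oriented chord diagram $\vec D$ for it, and deleting the chord $c$ from $\vec D$ leaves an oriented chord diagram for $(G[\{v_1,v_2,v_3\}],\beta[\{v_1,v_2,v_3\}])$, a triangle. By Lemma~\ref{lemma:unique} that diagram is unique up to equivalence, so it agrees with the restriction of $\vC$ to $\{v_1,v_2,v_3\}$; deleting the remaining chords of $\vC$ does not move the ends of $v_1,v_2,v_3$, so the three arcs $H(\vC,\beta,v_i)$ are unchanged and still have empty common intersection. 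But then, exactly as in the proof of Lemma~\ref{lemma:unique}, the head of $c$ in $\vec D$ would have to lie in $\bigcap_{i=1}^{3}H(\vC,\beta,v_i)=H(\vC,\beta,V(C))=\emptyset$, a contradiction. Hence $G[\{c,v_1,v_2,v_3\}]$ is a $K_4$-obstruction, proving the claim.

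The step I expect to be the main obstacle is the geometric fact in the first paragraph: reading off from the minimal-cover structure that the head of a chord $v_j$ of $C$ lies to the right of every non-neighbour $v_i$ in $C$, while keeping the open/closed-arc conventions and the head/tail labelling straight. Once that is in hand, the remainder is routine manipulation of the Naji equations together with Lemmas~\ref{lemma:unique} and~\ref{lemma:extend}.
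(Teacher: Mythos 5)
Your proof is correct and its skeleton matches the paper's: settle the triangle case via Lemma~\ref{lemma:unique}, and rule out $|V(C)|\ge 4$. The one place you diverge is in ruling out $k\ge 4$: the paper reorients $\beta$ at $c$ and observes that $T(\vC,\beta,v_1)$ and $T(\vC,\beta,v_3)$ then become the left arcs of two non-consecutive cycle chords, which are disjoint, directly contradicting the standing hypothesis of Case~2 that all pairwise $T$-intersections are nonempty. You use the same underlying geometric fact --- both ends of $v_3$ lie to the right of $v_1$ --- but convert it into $\beta(v_1,v_3)=\beta(v_3,v_1)=0$ and contradict $\NS_3(c,v_1,v_3)$ instead. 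Both routes are valid; the paper's is a one-liner given the case hypothesis, while yours establishes the slightly stronger statement that an induced cycle of length at least four completely joined to $c$ is incompatible with any Naji solution, independently of the Case~2 hypothesis. Your expanded treatment of the $k=3$ case (uniqueness of the triangle's oriented diagram forces the head of $c$ into $H(\vC,\beta,V(C))=\emptyset$) is exactly what the paper's terse appeal to Lemma~\ref{lemma:unique} is compressing, so no gap there.
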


\begin{proof}[Proof of claim.]
First consider the case that $C$ has three vertices. Then $G[V(C)\cup\{c\}]$ 
is isomorphic to $K_4$. Moreover, it follows from Lemma~\ref{lemma:unique} that $G[V(C)\cup\{c\}]$
is an obstruction, as required. Hence we may assume that $C$ has at least four vertices.
Let $\beta'=\beta+\delta_c$. Note that $\beta'(v_i,c)=1$ for each vertex $v_i$ of $C$.
In particular, this implies that $T(\C,\beta,v_1)$ and $T(\C,\beta,v_3)$ are disjoint, 
contrary to the hypotheses of this case.
\end{proof}

Henceforth we may assume that $c$ has no neighbours in $C$. Note that in this case 
$\delta_c(v_i,c)=0$ for each $i\in \bZ_k$ and hence $T(\vC,\beta)=\emptyset$. Therefore we are free
to reorient $\beta$ at $c$, however, when we reorient $\beta$ at $c$ we should also
reorient the chords adjacent to $c$ so that we keep the property that, for each $v\in V-c$,
the arc $H(\vC,\beta,v)$ lies to the right of $v$.
 
Let $v$ be a neighbour of $c$.  Since $\beta[V(G - c)]$ is chordal, by $\NS_1$ $v$ is also a neighbour of $C$.
For each $i\in \bZ_k$ we let $A_i$ denote the arc
$H(\vC,\beta,V(C)-\{v_i\})$. These arcs are disjoint and $k\ge 3$,
so one of these arcs, say $A_i$, lies either entirely to the right of $v$ or entirely to the left of $v$.
By possibly reorienting $\beta$ at $c$ and at each of its neighbours, we may assume that
$A_i$ lies to the left of $v$.
Thus  $H(\vC,\beta,(V(C)-\{v_i\})\cup\{v\})$  is empty.
Then, by Lemma~\ref{lemma:unique}, there is an induced cycle $C'$ in $G[(V(C)-\{v_i\})\cup\{v\}]$
such that $H(\vC,\beta,V(C'))$ is empty.
Note that $v\in V(C')$ and that this contradicts our initial choice of $C$.
\end{proof}

\section{Inducing a split}\label{section:splits}

In this section we complete the proof of Naji's Theorem by proving Lemma~\ref{induced:split}, showing that
any split in either a $K_4$-obstruction or in a  Claw obstruction will extend to a split in the original graph.

Let $(X_0,Y_0)$ be a split in an induced subgraph $H$ of $G$.
We say that $(X_0,Y_0)$ {\em induces a split} in $G$ if there 
is a split $(X,Y)$ in $G$ with $X_0\subseteq X$ and $Y_0\subseteq Y$.

We will start with Claw-obstructions, which are a little easier to deal with;  this proof is similar to that of Lemma~4.1 in~[\ref{GEL}].
\begin{lemma}\label{part1}
Let $\beta$ be a solution to the Naji system for a graph $G$ and let $H$ be a Claw-obstruction in 
$G$. Then each split in $H$ induces a split in $G$.
\end{lemma}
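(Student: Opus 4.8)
The plan is to construct the split in $G$ explicitly from $\beta$ and then verify it.

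\emph{Reductions and normalisation.} Whether a split of $H$ extends to a split of $G$ depends only on the graph $G$, not on $\beta$, and by Lemma~\ref{lemma:swap} we may reorient $\beta$ at any vertex without affecting this. Also, throwing every connected component of $G$ disjoint from $H$ entirely onto one side of a split of the component containing $H$ yields a split of $G$, so we may assume $G$ is connected. Let $x$ be the centre of the Claw and $a,b,c$ its leaves. Reorienting the leaves we arrange $\beta(x,a)=\beta(x,b)=\beta(x,c)=1$, hence $\beta(a,x)=\beta(b,x)=\beta(c,x)=0$. Then each equation $\NS_3(x,\cdot,\cdot)$ forces, on the corresponding non-adjacent leaf-pair, exactly one of the two $\beta$-values to be $1$; so $\beta$ determines a tournament $T$ on $\{a,b,c\}$, and a direct check with chord diagrams shows that $\beta[V(H)]$ is chordal exactly when $T$ is transitive. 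As $\beta[V(H)]$ is non-chordal, $T$ is a directed triangle. We fix the split to be $(\{x,a\},\{b,c\})$ (the other two splits of $H$ are handled symmetrically); since swapping $b$ and $c$ fixes this split and reverses $T$, we may assume $\beta(a,b)=\beta(b,c)=\beta(c,a)=1$ and $\beta(b,a)=\beta(c,b)=\beta(a,c)=0$.

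\emph{A structural fact.} Every vertex $v\notin\{a,b,c\}$ is adjacent to none, exactly one, or all three of $a,b,c$: if $v$ were adjacent to exactly $b$ and $c$, then $\NS_2(a,v,b)$ and $\NS_2(a,v,c)$ give $\beta(a,b)=\beta(a,v)=\beta(a,c)$, contradicting the directed triangle, and the other two two-leaf cases are identical. Hence, writing $K$ for the set of common neighbours of $a$, $b$ and $c$, we have $N(b)\cap N(c)=N(a)\cap N(b)\cap N(c)=K$, and $x\in K$.

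\emph{The candidate split and what must be checked.} If $(X',Y')$ is any split of $G$ with $x,a\in X'$ and $b,c\in Y'$, then every vertex of $X'$ with a neighbour in $Y'$ must be adjacent to both $b$ and $c$, hence lies in $K$, and dually every vertex of $Y'$ with a neighbour in $X'$ lies in $N(x)$; in particular $N(a)\subseteq X'$ and $a$ is joined to $\{b,c\}$ by no path avoiding $K$. This suggests the candidate: let $Y'$ consist of $b,c$ together with all vertices lying in a component of $G-K$ that meets $\{b,c\}$, and set $X'=V(G)\setminus Y'$. It then remains to check (i) that $a\in X'$, i.e. the component of $a$ in $G-K$ avoids $b$ and $c$, and (ii) that $(X',Y')$ is a split, i.e. every vertex of $K$ with a neighbour in $Y'$ is adjacent to every vertex of $Y'$ with a neighbour in $X'$ (the analogous statements on the $X'$-side being immediate from the definition of $Y'$).

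\emph{The main obstacle.} Both (i) and (ii) fail for an arbitrary graph with the stated adjacencies, so they must be forced by $\beta$; this is the heart of the proof. The tool is Lemma~\ref{strong} together with $\NS_2$ and $\NS_3$: given an offending configuration — a shortest path $a=v_0,\dots,v_t=b$ inside $G-K$, or a non-edge between a vertex of $K$ and a vertex of $Y'$ that has a neighbour in $X'$ — one pushes the $\beta$-relations along the path (the internal vertices are adjacent to at most one leaf, so the relevant $\NS_2$ equations apply freely) until one contradicts the directed triangle on $\{a,b,c\}$ or the defining property of $K$. I expect the bookkeeping in this final verification, organised according to the adjacency types of the endpoints to $\{x,a,b,c\}$, to be the bulk of the work; it is also, as the authors remark, the argument that becomes the $K_4$-case after local complementation at $x$.
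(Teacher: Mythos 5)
Your construction is the paper's construction: the normalisation of $\beta$, the tournament criterion for when the claw restriction is non-chordal, the set $K=N(a)\cap N(b)\cap N(c)$, and the candidate split assembled from components of $G-K$ all match the paper's proof (up to permuting the leaves). You also correctly isolate the two facts (i) and (ii) that remain. The problem is that those two facts \emph{are} the lemma, and your final paragraph defers them as ``bookkeeping'' instead of proving them; that is a genuine gap, not a presentational one. For (ii) in particular, the sketch as stated would not close: between an arbitrary $u\in K$ and an arbitrary $w\in Y'$ with a neighbour in $K$ there is no single path along which pushing $\NS_2$/$\NS_3$ relations yields the required edge $uw$. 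The paper needs an extra idea here: if $w$ lies in the component of $c$ in $G-K$ and has a neighbour $x'\in K$, then Lemma~\ref{strong}, applied to a path from $c$ to $w$ inside that component (which $a$ and $b$ cannot meet), gives $\beta(a,w)=\beta(a,c)$ and $\beta(b,w)=\beta(b,c)$, so $G[\{x',a,b,w\}]$ is a \emph{new} Claw-obstruction; applying the separation statement (i) to this new obstruction then forces $K\subseteq N(w)$, which is exactly what makes $(X',Y')$ a split. This bootstrap --- manufacturing fresh obstructions and re-applying (i) to them --- is absent from your outline, and I do not see how ``contradicting the directed triangle or the defining property of $K$'' substitutes for it.

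Even for (i) there is a detail your plan glosses over: after Lemma~\ref{strong} produces a neighbour $z$ of $c$ on an $a$--$b$ path in $G-K$, you must conclude $z$ is adjacent to both $a$ and $b$ (hence $z\in K$, a contradiction), and that only follows if the path is shortest among paths in $G-K$ joining \emph{any two} of $a,b,c$, not merely a shortest $a$--$b$ path. These are precisely the points handled by the paper's Claims~\ref{claim:claw1} and~\ref{claim:claw2}. As it stands you have a correct plan with the right tools named, but the argument that forces the plan to succeed is missing.
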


\begin{proof}
Consider an induced claw $G[\{x,a,b,c\}]$ in $G$ where $x$ is the vertex of degree $3$.
Summing $\NS_3(x,a,b)$, $\NS_3(x,b,c)$, and $\NS_3(x,c,a)$ gives
$$ (\beta(a,b)+\beta(a,c)) + (\beta(b,a)+\beta(b,c)) +(\beta(c,a)+\beta(c,b)) =1.$$
Therefore either one or three of $\beta(a,b)+\beta(a,c)$, $\beta(b,a)+\beta(b,c)$, and
$\beta(c,a)+\beta(c,b)$ is equal to $1$. Given three pairwise non-intersecting  chords $a'$, $b'$, and $c'$ in an oriented chord diagram $\vC$, we have
$\beta_{\vC}(a',b')+\beta_{\vC}(a',c')=1$ if an only if $a'$ separates $b'$ from $c'$. However, if $a'$ separates $b'$ from $c'$ then neither
$b'$ nor $c'$ separate the other two chords. Therefore, if
$\beta(a,b)+\beta(a,c)=1$, $\beta(b,a)+\beta(b,c)=1$, and
$\beta(c,a)+\beta(c,b)=1$, then $G[\{x,a,b,c\}]$ is an obstruction.
It is left to the reader to verify that $\beta[\{x,a,b,c\}]$ is chordal when exactly one of
$\beta(a,b)+\beta(a,c)$, $\beta(b,a)+\beta(b,c)$, and
$\beta(c,a)+\beta(c,b)$ is equal to $1$.

We denote the set of  neighbours of a vertex $v$ by $N(v)$.
We first prove the following two claims, analogous to Claims 4.1.1 and 4.1.2 in~[\ref{GEL}].
\begin{claim} \label{claim:claw1}
Let $G[\{x,a,b,c\}]$ be a Claw-obstruction where $x$ is the vertex of degree three and let
$X=N(a)\cap N(b)\cap N(c)$.
Then $a$, $b$ and, $c$ are in distinct components of $G - X$.
\end{claim}
\begin{proof}
Suppose otherwise and let $P$ be a shortest path in $G-X$ connecting two of $a$, $b$, and $c$.   
By symmetry we may assume that $P$ contains $a$ and $b$.
Since $G[\{x,a,b,c\}]$ is a Claw-obstruction, we have $\beta(c, a) \neq \beta(c,b)$.
Then, by Lemma~\ref{strong}, the vertex $c$ must have a neighbour, say $z$, in $P$.
However, by the minimality of $P$, both $za$ and $zb$ must be edges in $P$.
But then $z\in X$, which is not possible for vertices of $P$.
\end{proof}

Suppose that $V(H)=\{x,a,b,c\}$ where $x$ is the vertex of degree three in $H$ and let $X=N(a)\cap N(b)\cap N(c)$.
Let $X_a$ (respectively $X_b$ and $X_c$) denote the set of vertices that are in the same component of $G -  X$ as $a$ (respectively $b$ and $c$).
\begin{claim} \label{claim:claw2}
If $d\in X_a\cup X_b\cup X_c$ is a vertex with a neighbour in $X$, then $X$ is contained in $N(d)$. 
\end{claim}
\begin{proof}
Up to symmetry we may assume that $d\in X_c$.
Let $x'\in X$ be a neighbour of $d$. Note that $G[\{x',a,b,d\}]$ is a Claw.
By Claim \ref{claim:claw1}, the vertex $a$ is not in the same component of $G- X$ as $c$.  Hence, by Lemma \ref{strong}, we have  $\beta(a, c) = \beta(a, d)$.  By a 
symmetric argument $\beta(b, c) = \beta(b, d)$.   Since $H$ is a Claw-obstruction, $\beta(a,b) + \beta(a, c) = 1$ and $\beta(b, a) + \beta(b, c) = 1$. So
$\beta(a,b) + \beta(a, d) = 1$ and $\beta(b, a) + \beta(b, d) = 1$, and, hence, $G[\{x',a,b,d\}]$ is a Claw-obstruction.
Then, by Claim~\ref{claim:claw1}, it must be the case that $X\subseteq N(d)$, as required.
\end{proof}

Now consider a split $(A,B)$ in $H$; up to symmetry we may assume that 
 $A=\{a,b\}$ and $B=\{x,c\}$. Let $A'=X_a\cup X_b$ and $B'=V(G)-A'$. Note that $A\subseteq A'$, $B\subseteq B'$, and,
 by Claim~\ref{claim:claw2}, $(A',B')$ is a split in $G$.
\end{proof}

We now complete the proof of Lemma~\ref{induced:split} by showing that splits
in $K_4$-obstructions induce splits in the full graph.
\begin{lemma}\label{part2}
Let $\beta$ be a solution to the Naji system for a graph $G$ and let $H$ be a $K_4$-obstruction in 
$G$. Then each split in $H$ induces a split in $G$.
\end{lemma}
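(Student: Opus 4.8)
The plan is to follow the template of Lemma~\ref{part1}, adapted to the much more symmetric setting of $K_4$. The first step is to describe the $K_4$-obstructions. Since the only Naji equations involving only vertices of $H=G[\{a,b,c,d\}]$ are the six equations $\NS_1(x,y)$, the restriction $\beta[\{a,b,c,d\}]$ amounts to a tournament $T_\beta$ on $\{a,b,c,d\}$ (orient $xy$ from $x$ to $y$ when $\beta(x,y)=0$). I would verify that $\beta[\{a,b,c,d\}]$ is chordal if and only if $T_\beta$ is \emph{not} a directed triangle together with a source or a sink; equivalently, $H$ is a $K_4$-obstruction precisely when some vertex of $H$ is a source or a sink of $T_\beta$ and the remaining three vertices induce a directed triangle. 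The split $(\{a,b\},\{c,d\})$ is invariant under a group of symmetries that acts transitively on $\{a,b,c,d\}$, and reflection (the map $\beta\mapsto\beta+1$) exchanges sources with sinks; so after relabelling and possibly reflecting we may assume that $a$ is a source of $T_\beta$ and, swapping $c$ with $d$ if needed, that $b\to c\to d\to b$. This pins $\beta[\{a,b,c,d\}]$ down completely; in particular $\beta(d,a)\neq\beta(d,b)$, so $d$ plays the role of the ``witness'' for the pair $\{a,b\}$ that the third leaf played in Lemma~\ref{part1}.

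Next, as in Lemma~\ref{part1}, I would introduce a marker set and cut $G$ along it. The analogue of $N(a)\cap N(b)\cap N(c)$ is $X:=N(a)\cap N(b)$; note that $c,d\in X$ while $a,b\notin X$, so $a$ and $b$ lie in a common component $A'$ of $G-X$ (they are joined by the edge $ab$), and then $c,d\in V(G)\setminus A'$ automatically. Setting $Y':=V(G)\setminus A'$, the aim is to show that $(A',Y')$ is the required split. As in Lemma~\ref{part1}, this reduces to two statements paralleling Claims~\ref{claim:claw1} and~\ref{claim:claw2}: first, a separation statement controlling how the components of $G-X$ meet $\{a,b,c,d\}$; and second, that every vertex outside $X$ having a neighbour in $X$ is adjacent to \emph{all} of $X$ — which is exactly what forces the edges between $A'$ and $Y'$ to induce a complete bipartite graph (and $A',Y'$ each contain two of $a,b,c,d$, so both have size at least two). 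The engine for both, as before, is Lemma~\ref{strong}: along a shortest path witnessing a failure, a suitable vertex must have a neighbour on the path, and minimality of the path pushes that neighbour into $X$; and one reproduces the bootstrap of Claim~\ref{claim:claw2} by manufacturing, from a vertex $v\notin X$ with a neighbour $x'\in X$, a fresh $K_4$-obstruction through $v$ and $x'$ and then feeding it into the separation statement.

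The step I expect to be the main obstacle — and the reason Lemma~\ref{part2} is not a verbatim copy of Lemma~\ref{part1} — is that $a,b,c,d$ are pairwise adjacent, so Lemma~\ref{strong} can never be applied directly to a path between two of them: any candidate witness among $\{a,b,c,d\}$ is itself adjacent to both endpoints. Overcoming this is exactly the local-complementation phenomenon noted after Lemma~\ref{induced:split}: local complementation at $a$ turns $H$ into a claw centred at $a$ and carries the non-chordal solution on the $K_4$-obstruction to a non-chordal solution on a Claw-obstruction, after which Lemma~\ref{part1} applies. The delicate part of the proof is to perform this manoeuvre implicitly — choosing $X$ and running the path arguments inside $G$ so that they emulate the claw argument inside the local complement, and then checking that the split one extracts is genuinely a split of $G$ (equivalently, that it survives undoing the local complementation). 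Making that bookkeeping go through cleanly, without ever leaving $G$, is where the real content of the lemma lies.
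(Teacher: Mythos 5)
There is a genuine gap: you have written a plan that correctly identifies the main obstacle but then defers its resolution. You observe that Lemma~\ref{strong} cannot be applied between vertices of $H$ because they are pairwise adjacent, and you propose to get around this by an ``implicit local complementation'' reducing to Lemma~\ref{part1} --- but you never carry this out, and you yourself concede that ``making that bookkeeping go through cleanly \dots is where the real content of the lemma lies.'' That bookkeeping \emph{is} the proof. The paper does not use local complementation here at all; the missing engine is a direct parity identity (Claim~\ref{key:claim}): if $P$ is an induced path joining $a$ and $b$ outside a $K_4$ on $\{a,b,c,d\}$, then summing the equations $\NS_3(a,c,v_1)$, $\NS_3(b,c,v_{k-1})$, $\NS_3(a,d,v_1)$, $\NS_3(b,d,v_{k-1})$, $\NS_2(v_1,c,d)$, $\NS_2(v_{k-1},c,d)$, $\NS_1(a,c)$, $\NS_1(b,d)$, together with two instances of Lemma~\ref{strong} applied to $c$, $d$ and the interior of $P$, forces the $4$-cycle $(a,c,b,d,a)$ to be \emph{even} --- contradicting the fact that in a $K_4$-obstruction every $4$-cycle is odd. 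Nothing in your proposal produces this or any substitute for it, and without it none of the separation statements you want (``a witness must have a neighbour on the path, which is then pushed into $X$'') can be launched.

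Separately, your proposed cut set $X=N(a)\cap N(b)$ with $A'$ the component of $G-X$ containing $a,b$ is not justified and is unlikely to work as stated. For $(A',V(G)\setminus A')$ to be a split you must show that every vertex of $A'$ with a neighbour outside $A'$ is adjacent to every vertex outside $A'$ with a neighbour in $A'$; e.g.\ you must exclude a vertex $w$ joined to $a$ by a path in $G-X$ that is adjacent to $c$ but not to $d$. Ruling out such configurations is exactly what the paper's machinery does, and it requires more structure than one cut set: a ``transitivity'' claim (a vertex adjacent to three of the four vertices of a $K_4$-obstruction spawns a new $K_4$-obstruction), a maximal choice of four classes $(X_a,X_b,X_c,X_d)$ whose transversals are all $K_4$-obstructions, a separate class $Y$ of vertices adjacent to all of $X_a\cup X_b\cup X_c\cup X_d$, and an analysis showing each remaining component attaches to at most one class. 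Your tournament description of $K_4$-obstructions is consistent with the paper's (``every $4$-cycle is odd''), but everything downstream of it is missing.
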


\begin{proof}
For an edge $e=uv$ of $G$ the equation $\NS_1(u,v)$ implies that
exactly one of $\beta(u,v)$ and $\beta(v,u)$ is one.
So we can construct an orientation $\vG$ of $G$ such that 
$v$ is the head of $e$ if and only if $\beta(u,v)=1$.
Reorienting $\beta$ at a vertex $x$ has the effect of 
changing the orientations on all edges incident with $x$ 
and leaving the other edge orientations as they were.  

Consider a subgraph $H_0$ of $G$ that is isomorphic to $K_4$
and let $x\in V(H_0)$. We can reorient $\beta$ so that $H_0-x$
is a directed cycle in $\vG$ and so that at least two of the three edges of
$H_0$ incident with $x$ have $x$ as their tail. It is easy to verify that,
if the third edge has $x$ as head, then $\beta[V(H_0)]$ is chordal, while,
if that edge has $x$ as its tail, $H_0$ is an obstruction. 

Consider a $4$-cycle $C$ in $G$. We refer to $C$ as {\em odd} (respectively {\em even})
if we encounter an {\em odd} (respectively {\em even}) number of forward arcs
when we traverse $C$ in $\vG$; since $C$ has an even number of edges it does not 
matter which direction we traverse $C$. It is now easy to verify that $H_0$ is an obstruction
if and only if every $4$-cycle in $H_0$ is odd.   

\begin{claim}\label{key:claim}
Let $H_0=G[\{a,b,c,d\}]$ be a subgraph of $G$ isomorphic to $K_4$ and let $P$ be a path with 
distinct ends $a$ and $b$ in $H_0$ such that $V(P)\cap V(H_0) =\{a,b\}$ and $E(P)\cap E(H_0) = \emptyset$.
If $P\cup H_0$ is an induced subgraph of $G$, then the $4$-cycle $(a,c,b,d,a)$ of $G$ is even.
\end{claim}

\begin{proof}[Proof of claim.]
Suppose that the vertices of $P$ are $(v_0,v_1,\ldots,v_k)$, in that order, from $a$ to $b$. 
First consider the case that $k=2$. Adding the equations
$\NS_3(a,c,v_1)$, $\NS_3(b,c,v_{1})$, $\NS_3(a,d,v_1)$, $\NS_3(b,d,v_{1})$,
$\NS_1(a,c)$, and $\NS_1(b,d)$ gives
$ \beta(c,a)+\beta(a,d)+\beta(d,b) + \beta(b,c) =0,$
and hence the $4$-cycle $(a,c,b,d,a)$ of $G$ is even.
So we may assume that $k>2$.
By Lemma~\ref{strong}, we have $\beta(c,v_1)+\beta(c,v_{k-1})=0$ and $\beta(d,v_1)+\beta(d,v_{k-1})=0$.
Now add these two equations together with the equations $\NS_3(a,c,v_1)$, $\NS_3(b,c,v_{k-1})$, $\NS_3(a,d,v_1)$, $\NS_3(b,d,v_{k-1})$, \\
$\NS_2(v_1,c,d)$, $\NS_2(v_{k-1})$, $\NS_1(a,c)$, and $\NS_1(b,d)$ to obtain
$ \beta(c,a)+\beta(a,d)+\beta(d,b) + \beta(b,c) =0,$
and hence the $4$-cycle $(a,c,b,d,a)$ of $G$ is even.
\end{proof}

\begin{claim}\label{transitive}
Let $H_0=G[\{a,b,c,d\}]$ be a $K_4$-obstruction.
If $a'\in V(G)-\{a,b,c,d\}$ is a vertex that is adjacent to $b$, $c$, and $d$ but not $a$, then
$G[\{a',b,c,d\}]$ is a $K_4$-obstruction.
\end{claim}

\begin{proof}[Proof of claim.] Consider an arbitrary $4$-cycle $C$ of $H_0$.
Up to symmetry we may assume that $C$ is $(a,b,c,d,a)$. Since $H_0$ is a
$K_4$-obstruction,
$$ \beta(a,b)+\beta(b,c)+\beta(c,d)+\beta(d,a) = 1.$$
Adding the equations $\NS_3(d,a,a')$, $\NS_3(b,a,a')$, $\NS_1(a,b)$, and $\NS_1(a',b)$
to this equation gives
 $$ \beta(a',b)+\beta(b,c)+\beta(c,d)+\beta(d,a') = 1.$$
So each $4$-cycle of $G[\{a',b,c,d\}]$ is odd, as required.
\end{proof}

Choose maximal disjoint vertex-sets $(X_a,X_b,X_c,X_d)$ such that
\begin{itemize}
\item[(i)] each set $(X_a,X_b, X_c,  X_d)$ contains a vertex of $H$, and
\item[(ii)] for each $a\in X_a$, $b\in X_b$, $c\in X_c$, and $d\in X_d$
the subgraph $G[\{a,b,c,d\}]$ is a $K_4$-obstruction.
\end{itemize}
Since $H$ is a $K_4$-obstruction, every 4-cycle of $H$ is odd, so such sets exist. Let $X=X_a\cup X_b\cup X_c\cup X_d$.

\begin{claim}\label{neighbours}
For each $v\in V(G)-X$, either
\begin{itemize}
\item $v$ is adjacent to vertices in at most one of the sets
$(X_a,X_b,X_c,X_d)$, or
\item $v$ is adjacent to every vertex in $X$.
\end{itemize}
\end{claim}

\begin{proof}[Proof of Claim.]
Suppose otherwise that $v$ has neighbours in at least two of the sets
$(X_a,X_b,X_c,X_d)$, but that $v$ is not adjacent to every vertex in $X$.
By Claim~\ref{key:claim}, $v$ cannot have exactly two neighbours in any
$K_4$-obstruction. It follows that $v$ has neighbours in at least three of
$(X_a,X_b,X_c,X_d)$. Now, up to symmetry we can choose elements
$a\in X_a$, $b\in X_b$, $c\in X_c$, and $d\in X_d$ such that
$v$ is adjacent to $b$ and $c$ but not $a$. By Claim~\ref{key:claim},
$v$ is also adjacent to $d$. By changing our choice of $d\in X_d$ (respective
$b\in X_b$ and $c\in X_c$) and applying Claim~\ref{key:claim},
we have that $v$ is adjacent to each vertex in $X_d$ (respectively $X_b$ and $X_c$).
Now, for any $b'\in X_b$, $c'\in X_c$ and $d'\in X_d$, by Claim~\ref{transitive}, 
we have that $G[\{a,b',c',d'\}]$ is a $K_4$-obstruction.
Thus $(X_a\cup\{a\},X_b,X_c,X_d)$ satisfies $(ii)$, but this contradicts the 
maximality of our collection $(X_a,X_b,X_c,X_d)$.
\end{proof}

Let $Y$ denote the set of all vertices in $V(G)-X$ that are adjacent to every vertex in $X$.

\begin{claim}\label{claim:split}
Each component in $G-(X\cup Y)$ has neighbours in at most one of the sets
$(Y,X_a,X_b,X_c,X_d)$.
\end{claim}

 \begin{proof}[Proof of Claim.]
 By Claims~\ref{key:claim} and~\ref{neighbours}, no component of $G-(X\cup Y)$ has neighbours in two of the sets
 $(X_a,X_b,X_c,X_d)$. Suppose that there is a component of $G-(X\cup Y)$ with neighbours in both $X$ and $Y$.
 Consider a shortest path $P=(v_0,v_1,\ldots,v_k)$ such that $v_0$ has a neighbour in $X$ and $v_k$ has a neighbour in $Y$.
 Suppose that $a\in X$ is a neighbour of $v_0$ and $a'\in Y$ is a neighbour of $v_k$. By symmetry we may assume that 
 $a\in X_a$. By the maximality of $(X_a,X_b,X_c,X_d)$, there exist $b\in X_b$, $c\in X_c$, and $d\in X_d$ such that
 $G[\{a',b,c,d\}]$ is not a $K_4$-obstruction. By possibly reorienting $\beta$ at $b$, $c$, and $d$ we may assume that 
 the edges $ab$, $ac$, and $ad$ each have $a$ as their head and by possibly reorienting $\beta$ at $a'$ we may assume that
 at least two of the edges $a'b$, $a'c$, and $a'd$ have $a'$ as their head. Up to symmetry we may assume that $a'$ is the head
 of both $a'b$ and $a'c$. Since $G[\{a,b,c,d\}]$ is a $K_4$-obstruction but $G[\{a',b,c,d\}]$ is not, $a'$ must be the tail of $a'd$.
 However, then the $4$-cycle $(a,b,a',d,a)$ is odd, contrary to Claim~\ref{key:claim}. 
 \end{proof}
 
 Now consider a split $(A,B)$ in $H$; up to symmetry we may assume that 
 $A\subseteq X_a\cup X_b$ and $B\subseteq X_c\cup X_d$. Let $A'$ denote
 the union of $X_a$, $X_b$, together with the set of all
 vertices in components of $G-(X\cup Y)$ that have a neighbour in $X_a\cup X_b$.
 Let $B'=V(G)-A'$. Note that $A\subseteq A'$, $B\subseteq B'$, and,
 by Claim~\ref{claim:split}, $(A',B')$ is a split in $G$.
 \end{proof}

\section{Acknowledgements}
We thank the referees for their helpful comments.

\section*{References}
        
\newcounter{refs}
       
\begin{list}{[\arabic{refs}]}%
{\usecounter{refs}\setlength{\leftmargin}{10mm}\setlength{\itemsep}{0mm}}

\item \label{BO}
A. Bouchet,
Circle graph obstructions,
J. Combin. Theory, Ser. B, { 60} (1994), pp. 107-144.

\item \label{Bouchet1987}
A. Bouchet,
Reducing prime graphs and recognizing circle graphs,
Combinatorica { 7} (1987), pp. 243-254.

\item\label{FRAY}
H. de Fraysseix,
Local complementation and interlacement graphs,
Discrete Math. 33 (1981), pp. 29-35.

\item\label{GASSE}
E. Gasse,
A proof of a circle graph characterization,
Discrete Math. 173 (1997), pp. 273-283.

\item\label{GEL}
J. Geelen, B. Gerards,
Characterizing graphic matroids by a system of linear equations,
J. Comb. Theory, Ser. {B}, 103 (2013), pp. 642-646.

\item\label{NG-ORIG}
W. Naji,
{\em Graphes des cordes: une caract\'{e}risation et ses applications} (Th\' ese), Grenoble, 1985.

\item\label{TLN}
L. Traldi,
Notes on a theorem of Naji,
Discrete Math. 340 (2017), pp. 3217-3234.

\end{list}

\end{document}